\numberwithin{equation}{section}
\newtheorem{prop}{Proposition}[section]
\newtheorem*{thm*}{Theorem A}
\newtheorem{thm}{Theorem}[section]
\newtheorem{lemma}{Lemma}[section]
\newtheorem{remark}{Remark}[section]
\newtheorem{cor}{Corollary}[section]
\begin{document}

\def\IR{{\mathbb{R}}}

\newcommand{ \Om }{ \Omega}

\newcommand{ \pOm}{\partial \Omega}

\newcommand{ \RO}{\mathbb R^n\setminus \Omega}

\def\RR{{\mathbb{R}}}

\title{Regularity of  extremal solutions of nonlocal elliptic systems}

\author{Mostafa Fazly}

\address{Department of Mathematics, The University of Texas at San Antonio, San Antonio, TX 78249, USA} 

\email{mostafa.fazly@utsa.edu}


\maketitle

\vspace{3mm}

\begin{abstract}  We examine regularity of the extremal solution of nonlinear nonlocal eigenvalue problem
\begin{eqnarray*} 
 \left\{ \begin{array}{lcl}
\hfill   \mathcal L u    &=& \lambda F(u,v) \qquad \text{in} \ \  \Omega,   \\
\hfill \mathcal L v &=& \gamma G(u,v)  \qquad \text{in} \ \   \Omega,  \\
\hfill u,v &=&0 \qquad \qquad \text{on} \ \ \RO , 
\end{array}\right.
  \end{eqnarray*}  
 with an integro-differential operator, including the fractional Laplacian, of the form 
 \begin{equation*}\label{}
\mathcal L(u (x))= \lim_{\epsilon\to 0} \int_{\mathbb R^n\setminus B_\epsilon(x) } [u(x) - u(z)] J(z-x) dz , 
\end{equation*}
 when $J$ is a nonnegative measurable even jump kernel.  In particular, we consider jump kernels of the form of  $J(y)=\frac{a(y/|y|)}{|y|^{n+2s}}$  where $s\in (0,1)$ and $a$ is any nonnegative even measurable function in $L^1(\mathbb {S}^{n-1})$  that satisfies ellipticity assumptions. We first establish stability inequalities for minimal solutions of the above system for a general nonlinearity and  a general kernel. Then, we prove regularity of the extremal solution in dimensions $n < 10s$ and  
$ n<2s+\frac{4s}{p\mp 1}[p+\sqrt{p(p\mp1)}]$ for the Gelfand and Lane-Emden systems when $p>1$ (with positive and negative exponents), respectively.  When $s\to 1$, these dimensions are optimal. However, for the case of $s\in(0,1)$ getting the optimal dimension remains as an open problem.   Moreover, for general nonlinearities,  we consider gradient systems  and we establish regularity of the extremal solution  in dimensions $n<4s$.   As far as we know, this is the first regularity result on the extremal solution of nonlocal system of equations. 
  
\end{abstract}

\noindent
{\it \footnotesize 2010 Mathematics Subject Classification}. {\scriptsize 35R09, 35R11, 35B45, 35B65, 35J50}\\
{\it \footnotesize Key words: Nonlocal elliptic systems, regularity of extremal solutions, stable solutions, nonlinear eigenvalue problems}. {\scriptsize }

\section{Introduction and main results}\label{secin}

Let  $\Omega\subset\mathbb R^n$ be a bounded smooth domain. Consider the nonlinear nonlocal eigenvalue problem 
 \begin{eqnarray*} 
 (P)_{\lambda,\gamma} \qquad
 \left\{ \begin{array}{lcl}
\hfill   \mathcal L u    &=& \lambda F(u,v) \qquad \text{in} \ \  \Omega,   \\
\hfill \mathcal L v &=& \gamma G(u,v)  \qquad \text{in} \ \   \Omega,  \\
\hfill u,v &=&0 \qquad \qquad \text{on} \ \ \RO , 
\end{array}\right.
  \end{eqnarray*}  
where  $ \lambda, \gamma$ are positive parameters, $F,G$ are smooth functions and 
 the operator $\mathcal L$ is an integral operator of convolution type 
  \begin{equation}\label{Lui}
\mathcal L(u (x))= \lim_{\epsilon\to 0} \int_{\mathbb R^n\setminus B_\epsilon(x) } [u(x) - u(z)] J(z-x) dz .
\end{equation}
Here,  $J$ is a nonnegative measurable even jump kernel such that 
\begin{equation}
\int_{\mathbb R^n} \min\{|y|^2,1\} J(y) dy<\infty. 
\end{equation}
The above nonlocal operator with a measurable  kernel 
 \begin{equation}\label{Jump}
J(x,z)= \frac{c(x,z)}{|x-z|^{n+2s}}, 
 \end{equation} 
 when the  function $c(x,z)$ is bounded between two positive constants,  $0<c_1 \le c_2$,   is studied extensively in the literature from both theory of partial differential equations and theory of probability points of view, see the book of Bass \cite{bass} and references therein.    Integro-differential equations and systems,  of the above form,  arise naturally
in the study of stochastic processes with jumps, and more precisely in L\'{e}vy processes. A L\'{e}vy process is a stochastic process with independent and stationary increments.  A special class of such processes is the so called stable
processes. These are the processes that satisfy self-similarity properties, and they
are also the ones appearing in the Generalized Central Limit Theorem. We refer interested readers to the book of Bertoin \cite{ber} for more information.   The infinitesimal generator of any isotropically symmetric stable L\'{e}vy process in $\mathbb R^n$  is 
 \begin{equation}\label{}
\mathcal Lu(x)=\int_{\mathbb S^{n-1}} \int_{-\infty}^\infty [u(x+r\theta)+u(x-r\theta) -2 u(x)]\frac{dr}{r^{1+2s}} d\mu(\theta), 
\end{equation}
where $\mu$ is any nonnegative and finite measure on the unit sphere $\mathbb S^{n-1}$ called the spectral measure and $s\in (0, 1)$. When the spectral measure is absolutely continuous, $d\mu(\theta) = a(\theta)d\theta$, the above operators can be rewritten in the form of 
 \begin{equation}\label{La}
\mathcal L(u (x))= \lim_{\epsilon\to 0} \int_{\mathbb R^n\setminus B_\epsilon(x) } [ u(x+y)+u(x-y)-2u(x)] 
\frac{a(y/|y|)}{|y|^{n+2s}} dy ,
\end{equation}
 where $s\in (0,1)$ and $a$ is any nonnegative even function in $L^1(\mathbb S^{n-1})$. Note that the fractional Laplacian operator $\mathcal L={(-\Delta)}^{s}$ with $0<s<1$ that is 
 \begin{equation}\label{Luj}
\mathcal L u(x) = \lim_{\epsilon\to 0} \int_{\mathbb R^n\setminus B_\epsilon(x) } [u(x) - u(z)]  \frac{c_{n,s}}{|x-z|^{n+2s}} dz,
  \end{equation}   
  for a positive constant $c_{n,s}$ is the simplest stable L\'{e}vy process for $d\mu(\theta) = c_{n,s} d\theta$.   Note that the above operator can be written in the form of (\ref{Lui}) due to the fact that $a$ is even. The regularity of solutions for equation $\mathcal Lu=f$ has been  studied thoroughly in the literature by many experts and in this regard we refer interested to \cite{bass, cs, fro1, rs1, si} and references therein.  The most common assumption on the jump kernel in this context is $0<c_1\le a(\theta) \le c_2$ in $\mathbb S^{n-1}$ and occasionally $a(\theta)\ge c_1>0$ in a subset of  $\mathbb S^{n-1}$ with positive measure.  In this article, we consider the ellipticity assumption on the operator $\mathcal L$  of the form 
\begin{equation}\label{c1c2}
0<c_1 \le \inf_{\nu\in \mathbb S^{n-1}} \int_{\mathbb S^{n-1}} |\nu\cdot\theta|^{2s} a(\theta)d\theta \ \ \text{and} \ \ 0 \le a(\theta) < c_2 \ \ \text{for all}\ \ \theta\in  \mathbb S^{n-1}, 
\end{equation} 
 where $c_1$ and $c_2$ are  constants.  Note that regularity results (interior and boundary) under such an assumption on general operator $\mathcal L$ is studied in the literature,  and in this regard we refer interested readers to \cite{rs2, rs1} and references therein.  For particular nonlinearities of $F$ and $G$, we consider the following Gelfand system  
 \begin{eqnarray*}
(G)_{\lambda,\gamma}\qquad  \left\{ \begin{array}{lcl}
\hfill  \mathcal L  u    &=& \lambda e^v \qquad \text{in} \ \  \Omega,   \\
\hfill \mathcal L v  &=& \gamma e^u \qquad \text{in} \ \  \Omega,  \\
\hfill u, v &=&0 \qquad  \text{on} \ \ \RO , 
\end{array}\right.
  \end{eqnarray*}  
  and the Lane-Emden system, when $p>1$
  \begin{eqnarray*}
(E)_{\lambda,\gamma}\qquad  \left\{ \begin{array}{lcl}
\hfill   \mathcal L u    &=& \lambda (1+v)^p \qquad \text{in} \ \  \Omega,   \\
\hfill \mathcal L v  &=& \gamma (1+u)^p \qquad \text{in} \ \  \Omega,  \\
\hfill u, v &=&0 \qquad \qquad  \text{on} \ \ \RO, 
\end{array}\right.
  \end{eqnarray*}  
  and the  Lane-Emden system with singular nonlinearity, for $p>1$ and when   $0<u,v<1$
  \begin{eqnarray*}
(M)_{\lambda,\gamma}\qquad  \left\{ \begin{array}{lcl}
\hfill  \mathcal  L u    &=& \frac{\lambda}{(1-v)^p} \qquad \text{in} \ \  \Omega,   \\
\hfill\mathcal  L v  &=& \frac{\gamma}{(1-u)^p} \qquad \text{in} \ \  \Omega,  \\
\hfill u, v &=&0 \qquad \qquad  \text{on} \ \ \RO. 
\end{array}\right.
  \end{eqnarray*}  
 Note that for the case of $p=2$ the above singular nonlinearity and system is known as the MicroElectroMechanical Systems (MEMS), see \cite{egg2,lw} and references therein for the mathematical analysis of such equations. In addition, we study the following gradient system with more general nonlinearities 
 \begin{eqnarray*}
(H)_{\lambda,\gamma}\qquad  \left\{ \begin{array}{lcl}
\hfill  \mathcal L  u    &=& \lambda f'(u) g(v) \qquad \text{in} \ \  \Omega,   \\
\hfill \mathcal L  v &=& \gamma f(u) g'(v)  \qquad \text{in} \ \  \Omega,  \\
\hfill u, v &=&0 \qquad  \qquad \text{on} \ \ \RO. 
\end{array}\right.
  \end{eqnarray*}
   The nonlinearities $f$ and $g$ will satisfy various properties but will always at least satisfy
 \begin{equation} \label{R}
f \text{ is smooth, increasing and convex with }  f(0)=1  \text{ and }   f \text{ superlinear at infinity}. 
\end{equation}

A bounded weak solution pair $(u,v)$ is called a classical solution when both components $u,v$ are regular in the interior of $\Omega$ and $(P)_{\lambda,\gamma}$ holds.  Given a  nonlinearity $ f$ which satisfies (\ref{R}), the following nonlinear eigenvalue problem 
 \begin{eqnarray*}
\hbox{$(Q)_{\lambda}$}\hskip 50pt \left\{ \begin{array}{lcl}
\hfill    -\Delta u  &=& \lambda f(u)\qquad \text{in} \ \  \Omega,   \\
\hfill u &=&0 \qquad \qquad  \text{on} \ \ \partial \Omega,
\end{array}\right.
  \end{eqnarray*}
  is  now well-understood. Brezis and V\'{a}zquez in  \cite{BV} raised the question of determining the boundedness of $u^*$ for general nonlinearities $f$  satisfying (\ref{R}).  See, for instance, \cite{BV,Cabre,CC, CR, cro, rs, cdds, Nedev,bcmr,v2,ns} for both local and nonlocal cases.  It is known that there exists a critical parameter  $ \lambda^* \in (0,\infty)$, called the extremal parameter,  such that for all $ 0<\lambda < \lambda^*$ there exists a smooth, minimal solution $u_\lambda$ of $(Q)_\lambda$.  Here the minimal solution means in the pointwise sense.  In addition for each $ x \in \Omega$ the map $ \lambda \mapsto u_\lambda(x)$ is increasing in $ (0,\lambda^*)$.   This allows one to define the pointwise limit $ u^*(x):= \lim_{\lambda \nearrow \lambda^*} u_\lambda(x)$  which can be shown to be a weak solution, in a suitably defined sense, of $(Q)_{\lambda^*}$.  For this reason $ u^*$ is called the extremal solution. It is also known that for $ \lambda >\lambda^*$,  there are no weak solutions of $(Q)_\lambda$.  The regularity of the extremal solution has been of great interests in the literature.  There have several attempts to tackle the problem and here we list a few.    
For a general nonlinearity $f$ satisfying (\ref{R}),  Nedev in \cite{Nedev} proved that $u^*$ is bounded when $ n \le 3$. This was extended to fourth dimensions when $ \Omega$ is a convex domain by Cabr\'{e} in \cite{Cabre}. The convexity of the domain was relaxed by Villegas in \cite{v2}. Most recently, Cabr\'{e} et al. in \cite{cfsr} claimed the regularity result when $n\le 9$.  For the particular nonlinearity  $f(u)=e^u$, known as the Gelfand equation, the regularity is shown $u^*\in L^{\infty}(\Omega)$ for dimensions $n<10$ by Crandall and  Rabinowitz in \cite{CR}, see also \cite{far2}.  If $ \Omega$ is a radial domain in $ \IR^n$ with $ n <10$ the regularity is shown in \cite{CC} when $f$ is a general nonlinearity satisfying conditions (\ref{R}) but without the convexity assumption.   In view of the above result for the exponential nonlinearity, 
this is optimal.  Note that for the case of $\Omega=B_1$, the classification of all radial solutions to this problem was originally done by Liouville in \cite{liou} for $n=2$ and  then in higher dimensions in \cite{ns, jl,MP1} and references therein. For power nonlinearity  $f(u)=(1+u)^p$ and for singular nonlinearity $f(u)=(1-u)^{-p}$ when $0<u<1$ for $p>1$, known as the Lane-Emden equation and MEMS equation respectively, the regularity of extremal solutions is established for the  Joseph-Lundgren  exponent, see \cite{jl}, in the literature.  We refer interested readers to \cite{gg,egg1,egg2,far1,CR} and references therein for regularity results and Liouville theorems.

The regularity of extremal solutions for
nonlocal eigenvalue problem, 
 \begin{eqnarray*}
\hbox{$(S)_{\lambda}$}\hskip 50pt \left\{ \begin{array}{lcl}
\hfill    {(-\Delta)}^s u  &=& \lambda f(u)\qquad \text{in} \ \  \Omega,   \\
\hfill u &=&0 \qquad  \text{on} \ \ \RO,
\end{array}\right.
  \end{eqnarray*}
 is studied in the literature, see \cite{rs,r1, sp, cdds}, when $0<s<1$. However, there are various questions remaining as open problems. Ros-Oton  and Serra in \cite{rs} showed that for a general nonlinearity $f$,  $u^*$ is bounded when $n<4s$. In addition, if  the following limit exists
\begin{equation}\label{conf}
\lim_{t\to\infty} \frac{f(t)f''(t)}{\left|f'(t)\right|^2} <\infty , 
\end{equation}
 then $u^*$ is bounded when $n<10s$. Note that specific nonlinearities $f(u)=e^u$, $f(u)=(1+u)^p$ and $f(u)=(1-u)^{-p}$ for $p>1$ satisfy the above condition (\ref{conf}). When $s\to 1$, the dimension $n<10$ is optimal. However, for the fractional Laplacian $n<10s$  is not optimal, see Remark \ref{rem1}.  In the current article, we prove counterparts of  these regularity results for system of nonlocal equations. Later in \cite{r1}, Ros-Oton considered the fractional Gelfand problem, $f(u)=e^u$, on a domain $\Omega$ that is convex in the $x_i$-direction and symmetric with respect to $\{x_i=0\}$, for $1\le i\le n$. As an example, the unit ball satisfies these conditions. And he proved that $u^*$ is bounded for either $n\le 2s$ or $n>2s$ and 
 \begin{equation}
\frac{ \Gamma(\frac{n}{2}) \Gamma(1+s)}{\Gamma(\frac{n-2s}{2})} >
\frac{ \Gamma^2(\frac{n+2s}{4})}{\Gamma^2(\frac{n-2s}{4})} . 
\end{equation}
This, in particular, implies that $u^*$ is bounded in dimensions $ n\le 7$ for all $s\in (0,1)$.  The above inequality is expected to provide the optimal dimension, see Remark \ref{rem1}. Relaxing the convexity and symmetry conditions on the domain remains an open problem.  Capella et al. in \cite{cdds} studied the extremal solution of a problem related to $(S)_{\lambda}$  in the unit ball $B_1$ with a spectral fractional Laplacian operator that is defined using Dirichlet eigenvalues and eigenfunctions of the Laplacian operator in $B_1$. They showed that $u^*\in L^\infty(B_1)$ when $2\le n< 2\left[s+2+\sqrt{2(s+1)}\right]$. 
 More recently, Sanz-Perela in \cite{sp} proved regularity of the extremal solution of $(S)_{\lambda}$ with the fractional Laplacian operator in the unit ball with the same condition on $n$ and $s$. This implies that $u^*$ is bounded in dimensions $2\le n\le 6$ for all $s\in (0,1)$.  Note also that it is well-known that there is a correspondence between the regularity of stable solutions on bounded domains and the Liouville theorems for stable solutions on the entire space, via rescaling and a blow-up procedure. For the
  classification of solutions of  above nonlocal equations on the entire space we refer interested  readers to \cite{clo,ddw,fw,li}, and for the local equations to  \cite{egg2,far1,far2} and references therein.

 For the case of systems, as discussed in \cite{cf,Mont,faz}, set  $ \mathcal{Q}:=\{ (\lambda,\gamma): \lambda, \gamma >0 \}$ and  define
\begin{equation}
\mathcal{U}:= \left\{ (\lambda,\gamma) \in \mathcal{Q}: \mbox{ there exists a smooth solution $(u,v)$ of $(P)_{\lambda,\gamma}$} \right\}.
\end{equation}
  We assume that $F(0,0),G(0,0)>0$.
 A simple argument shows that if  $F$ is superlinear at infinity for $ u$, uniformly in $v$,  then the set of $ \lambda$ in $\mathcal{U}$ is bounded.  Similarly we assume that $ G$ is superlinear at infinity for  $ v$, uniformly in $u$,  and hence we get $ \mathcal{U}$ is bounded.  We also assume that $F,G$ are increasing in each variable.   This allows the use of a sub/supersolution approach and one easily sees that if $ (\lambda,\gamma) \in \mathcal{U}$ then so is $ (0,\lambda] \times (0,\gamma]$.   One also sees that $ \mathcal{U}$ is nonempty. We now define
 $ \Upsilon:= \partial \mathcal{U} \cap \mathcal{Q}$, which plays the role of the extremal parameter $ \lambda^*$.      Various properties of $ \Upsilon$ are known, see \cite{Mont}.     Given $ (\lambda^*,\gamma^*) \in \Upsilon$ set $ \sigma:= \frac{\gamma^*}{\lambda^*} \in (0,\infty)$ and define
  \begin{equation}
 \Gamma_\sigma:=\{ (\lambda, \lambda \sigma):  \frac{\lambda^*}{2} < \lambda < \lambda^*\}.
   \end{equation}
  We let $ (u_\lambda,v_\lambda)$ denote the minimal solution $(P)_{\lambda, \sigma \lambda}$ for $ \frac{\lambda^*}{2} < \lambda < \lambda^*$.  One easily sees that for each $ x \in \Omega$ that $u_\lambda(x), v_\lambda(x)$ are increasing in $ \lambda$ and hence we define
 \begin{equation}
  u^*(x):= \lim_{\lambda \nearrow \lambda^*} u_\lambda(x) \ \ \text{and} \ \   v^*(x):= \lim_{\lambda \nearrow \lambda^*} v_\lambda(x),
  \end{equation}
   and we call $(u^*,v^*)$ the extremal solution associated with $ (\lambda^*,\gamma^*) \in \Upsilon$.
 Under some very  minor growth assumptions on $F$ and $G$ one can show that $(u^*,v^*)$ is a weak solution of $(P)_{\lambda^*,\gamma^*}$.  For the rest of this article we refer to  $(u^*,v^*)$ as $(u,v)$. For the case of local Laplacian operator,  Cowan and the author in \cite{cf} proved that the extremal solution of $(H)_{\lambda,\gamma}$ when $\Omega$ is a convex domain is regular provided $1\le n \le 3$ for general nonlinearities $f,g\in C^1(\mathbb R)$ that satisfying (\ref{R}).   This can be seen as a counterpart of the Nedev's result for elliptic gradient systems.  For radial solutions,  it is also shown in \cite{cf} that stable solutions are regular in dimensions $1\le n <10$  for general nonlinearities. This is a counterpart of the regularity result of Cabr\'{e}-Capella \cite{CC} and Villegas \cite{v2} for elliptic gradient systems.    For the local Gelfand system,  regularity of the extremal solutions is given by Cowan in \cite{cow} and by Dupaigne et al.  in \cite{dfs} when $n<10$.  
 
Here are our main results. The following theorem deals with regularity of the extremal solution of nonlocal Gelfand, Lane-Emden and MEMS systems. 
\begin{thm}\label{thmg}
Suppose that $ \Omega$ is a bounded smooth  domain in $ \IR^n$. Let $(\lambda^*,\gamma^*) \in \Upsilon$ and $\mathcal L$ is given by (\ref{La}) where the ellipticity condition (\ref{c1c2}) holds and $0<s<1$. Then, the associated extremal solution of  $ (G)_{\lambda^*,\gamma^*}$, $ (E)_{\lambda^*,\gamma^*}$ and  $ (M)_{\lambda^*,\gamma^*}$ is bounded when 
  \begin{eqnarray}\label{dimg}
&&n < 10s, 
\\&&\label{dime} n<2s+\frac{4s}{p-1}[p+\sqrt{p(p-1)}], 
\\&&\label{dimm} n<2s+\frac{4s}{p+1}[p+\sqrt{p(p+1)}], 
 \end{eqnarray}
respectively. 
 \end{thm}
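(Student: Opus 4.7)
The plan is to adapt the Crandall-Rabinowitz technique (as in \cite{CR,rs}) to the nonlocal system setting. First, I would invoke the semi-stability inequality for the minimal solution pair established earlier in the paper. For the Gelfand system $(G)_{\lambda,\gamma}$ this takes the schematic form
\[
\int_\Omega \phi\,\mathcal L\phi\, dx+\int_\Omega \psi\,\mathcal L\psi\,dx\;\geq\;2\sqrt{\lambda\gamma}\int_\Omega e^{(u+v)/2}\phi\psi\,dx
\]
for admissible test pairs $(\phi,\psi)$; the Lane-Emden and MEMS versions are analogous, with $e^{(u+v)/2}$ replaced by $\sqrt{F_{v}G_{u}}$ evaluated at the minimal solution.

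Next, I would substitute exponential test functions $\phi=e^{\alpha v}-1$ and $\psi=e^{\alpha u}-1$ for $(G)$ (respectively $\phi=(1+v)^\alpha-1$, $\psi=(1+u)^\alpha-1$ for $(E)$ and $\phi=(1-v)^{-\alpha}-1$, $\psi=(1-u)^{-\alpha}-1$ for $(M)$) with a free parameter $\alpha>0$. To control the left-hand side, I would rely on the pointwise convexity inequality
\[
\mathcal L(h\circ u)(x)\;\geq\;h'(u(x))\,\mathcal L u(x),\qquad h\text{ convex with }h(0)=0,
\]
together with the equations $\mathcal L u=\lambda F$ and $\mathcal L v=\gamma G$, to bound the energy terms $\int\phi\,\mathcal L\phi$ and $\int\psi\,\mathcal L\psi$ by weighted integrals involving the nonlinearities. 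Combining this with the stability inequality produces an a priori estimate of the form $\int_\Omega e^{\beta(u+v)}\,dx\leq C$ for $(G)$ (and the analogous power/singular moment bounds for $(E)$ and $(M)$), where $\beta=\beta(\alpha)$. Optimizing $\alpha$ subject to the constraint that all coefficients stay nonnegative leads to a quadratic in $\alpha$ whose discriminant produces exactly the dimension bounds (\ref{dimg}), (\ref{dime}), (\ref{dimm}).

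Finally, I would bootstrap using the interior $L^p$ regularity theory for $\mathcal L$ (see \cite{rs2,rs1}): once the right-hand sides $\lambda F(u,v)$ and $\gamma G(u,v)$ lie in a sufficiently high $L^p$, the regularity of $\mathcal L^{-1}$ combined with the Sobolev embedding $W^{2s,p}\hookrightarrow L^\infty$ (valid once $2sp>n$) promotes $(u,v)$ to $L^\infty(\Omega)\times L^\infty(\Omega)$ in finitely many iterations.

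The main obstacle is Step 2: the nonlocal operator admits no chain rule, so the energy $\int\phi\,\mathcal L\phi$ cannot be identified with an $\int h'(u)^2|\nabla u|^2$-type quantity as in the local case, and one is forced to work with the one-sided convexity estimate above. This is just enough to match the scalar dimension $n<10s$ of Ros-Oton-Serra \cite{rs}, but the slack in the inequality is precisely what prevents reaching the conjecturally optimal dimension for fixed $s\in(0,1)$. A secondary difficulty is the coupling: the pair $(\phi,\psi)$ must be chosen symmetrically so that the cross term $\int e^{(u+v)/2}\phi\psi$ simultaneously absorbs contributions from both equations, which forces a common exponent $\alpha$ in the two components and explains why $(\lambda^*,\gamma^*)$ enters the final estimates only through the product $\lambda^*\gamma^*$.
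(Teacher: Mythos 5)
Your high-level framework (stability inequality, nonlinear test functions built from the minimal solution, combination with the equations, then bootstrap via Proposition~\ref{propregL}) is indeed the route the paper follows, but the central technical device you invoke in Step~2 is wrong in a way that breaks the dimension count. First, the sign of your ``pointwise convexity inequality'' is reversed: for $h$ convex and $\mathcal L$ of the form~(\ref{Lui}) with a nonnegative kernel, convexity gives $h(u(x))-h(u(z))\le h'(u(x))[u(x)-u(z)]$ pointwise, so the C\'ordoba--C\'ordoba/Kato estimate reads $\mathcal L(h\circ u)\le h'(u)\,\mathcal L u$, not $\ge$. With your $\ge$ you obtain a lower bound on $\int\phi\,\mathcal L\phi$, which is useless for absorbing the stability term. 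Second, even after correcting the sign, the Kato inequality is strictly weaker than the tool the paper actually uses and cannot reach the stated thresholds. Multiplying the (correct) Kato estimate by $\phi=h(u)\ge 0$ and integrating, one gets, via Lemma~\ref{fgprop},
\begin{equation*}
\frac12\int\!\!\int |h(u(x))-h(u(z))|^2\,J\,dz\,dx\;\le\;\lambda\int_\Omega h(u)h'(u)\,F(u,v)\,dx,
\end{equation*}
and for the Gelfand system with $h(s)=e^{ts/2}-1$ the constant on the right is $\tfrac{t}{2}$ in front of $\int e^{tu}e^v$. The paper instead tests the equation against $H(u)$ with $H'=(h')^2$, represents $\int H(u)\mathcal L u$ as a double integral via Lemma~\ref{fgprop}, and applies the sharp pointwise Cauchy--Schwarz estimate $|h(\beta)-h(\alpha)|^2\le (H(\beta)-H(\alpha))(\beta-\alpha)$, i.e.\ $(e^{t\beta/2}-e^{t\alpha/2})^2\le\tfrac{t}{4}(e^{t\beta}-e^{t\alpha})(\beta-\alpha)$ as in Lemma~\ref{lemuvgt}, which produces the constant $\tfrac{t}{4}$. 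Since the admissible range of $t$ is cut off exactly where this coefficient reaches~$1$, the factor-of-two loss from Kato halves the range of $t$ (roughly $t<2$ rather than $t<4$) and so gives a strictly smaller dimension (roughly $n<6s$ rather than $n<10s$). The analogous pointwise Cauchy--Schwarz constants $\tfrac{(t+1)^2}{4t}$ and $\tfrac{(t-1)^2}{4t}$ in Lemmas~\ref{lemuvet} and~\ref{lemuvmt} are what produce the Joseph--Lundgren-type exponents in~(\ref{dime})--(\ref{dimm}); a Kato-based bound degrades these too.

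Finally, the coupling is handled differently than you suggest. Rather than a crossed pair $\phi=h(v)$, $\psi=h(u)$, the paper first proves the preliminary uniform bound $\int_\Omega e^{u+v}\,dx\le C$ (Lemma~\ref{lemuvg}, and its analogues~\ref{lemuve},~\ref{lemuvm}), then tests stability with $\zeta=e^{tu/2}-1$ (and symmetrically in $v$), obtaining a coupled system of inequalities in the quantities $X$, $Y$, $Z$, $W$ of Lemmas~\ref{lemuvgt}--\ref{lemuvmt} that is closed by H\"older and Cauchy--Schwarz applied to the product $XY$. Your sketch omits both the preliminary $L^1$-type estimate and this system-closing step; in the coupled setting these are where most of the work is. Your closing remark about the slack preventing the conjecturally optimal $s$-dependent dimension is, however, accurate.
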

The following theorem is a counterpart of the Nedev regularity result for nonlocal system $(H)_{\lambda,\gamma}$.  
\begin{thm} \label{nedev} Suppose that $ \Omega$ is a bounded smooth convex domain in $ \IR^n$.  Assume that $f$ and $g$ satisfy condition (\ref{R}) and  $f'(0), g'(0)>0$ when  $ f'(\cdot),g'(\cdot)$ are convex and 
\begin{equation} \label{deltaeps}
 \liminf_{s \rightarrow \infty} \frac{\left[f''(s)\right]^2}{f'''(s)f'(s)} >0 \ \ \text{and} \ \   \liminf_{s \rightarrow \infty} \frac{\left[g''(s)\right]^2}{g'''(s)g'(s)} >0,
\end{equation}
 holds.  Let $(\lambda^*,\gamma^*) \in \Upsilon$ and $\mathcal L={(-\Delta)}^s$ for $0<s<1$.  Then,  the associated extremal solution $(u,v)$ of  $ (H)_{\lambda^*,\gamma^*}$ is bounded   where $n < 4s$.  
\end{thm}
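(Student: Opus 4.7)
My plan is to extend the scalar strategy of Ros-Oton and Serra \cite{rs} for $(S)_\lambda$ in the range $n<4s$ to the gradient system $(H)_{\lambda,\gamma}$, by combining a system stability inequality with Nedev-type test functions, the symmetric nonlocal integration-by-parts identity, and the fractional Sobolev embedding. The new difficulty relative to the scalar case is that system stability couples the two test functions through a cross term, and condition (\ref{deltaeps}) will ultimately be what lets one decouple them.

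The first step is to establish the stability inequality for minimal solutions of $(H)_{\lambda,\gamma}$. Because $(H)_{\lambda,\gamma}$ is the Euler--Lagrange system of the energy $E(u,v)=\tfrac{1}{2\lambda}\langle u,\mathcal{L}u\rangle+\tfrac{1}{2\gamma}\langle v,\mathcal{L}v\rangle-\int_\Omega f(u)g(v)$, nonnegativity of the second variation of $E$ at a minimal solution along $\Gamma_\sigma$ yields, after the rescaling $\phi\mapsto\sqrt{\gamma}\,\phi$, $\psi\mapsto\sqrt{\lambda}\,\psi$,
\begin{equation*}
\int_\Omega\!\bigl[\lambda f''(u)g(v)\phi^2+\gamma f(u)g''(v)\psi^2\bigr]+2\sqrt{\lambda\gamma}\!\int_\Omega\!f'(u)g'(v)\,\phi\psi \;\le\; \mathcal{E}(\phi,\phi)+\mathcal{E}(\psi,\psi),
\end{equation*}
for all admissible $(\phi,\psi)\in H^s_0(\Omega)\times H^s_0(\Omega)$, where $\mathcal{E}(\phi,\phi)=\tfrac{c_{n,s}}{2}\iint(\phi(x)-\phi(z))^2|x-z|^{-n-2s}\,dx\,dz$ is the Dirichlet form of $(-\Delta)^s$. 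This is the nonlocal analogue of the local gradient-system stability derived in \cite{cf}.

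Next I would plug in the Nedev-type pair $\phi=f'(u)-f'(0)$, $\psi=g'(v)-g'(0)$, which is admissible since $f'(0),g'(0)>0$ and $f',g'$ are convex. The convexity of $f'$ gives the pointwise estimate $(f'(a)-f'(b))^2\le \max\{f''(a),f''(b)\}^2(a-b)^2$, so that $\mathcal{E}(\phi,\phi)$ is dominated by a weighted double integral of $f''(u(x))^2(u(x)-u(z))^2 K(x-z)$. Exploiting the symmetric identity
\begin{equation*}
\int_\Omega h(u)(-\Delta)^s u\,dx =\tfrac{1}{2}\iint\bigl(h(u(x))-h(u(z))\bigr)\bigl(u(x)-u(z)\bigr)K(x-z)\,dx\,dz
\end{equation*}
and the first equation of $(H)_{\lambda,\gamma}$, this reduces to an integral polynomial in $f, f', f''$ and $g$. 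Condition (\ref{deltaeps}), equivalent to $f'''\le C(f'')^2/f'$ at infinity, is precisely what allows the higher-order terms arising from this chain-rule manipulation to be reabsorbed; a symmetric argument handles $\mathcal{E}(\psi,\psi)$ using the second equation.

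Combining these with the stability inequality produces a closed $L^p$ estimate on $f(u)g(v)$. The fractional Sobolev embedding $\dot H^s\hookrightarrow L^{2n/(n-2s)}$, together with the $L^p\to L^\infty$ mapping property of $(-\Delta)^{-s}$ on $\Omega$, then closes a bootstrap that terminates with $u^*,v^*\in L^\infty(\Omega)$ exactly in the range $n<4s$, paralleling \cite{rs}; the convexity of $\Omega$ enters through the fractional Pohozaev identity of Ros-Oton and Serra needed to control the boundary behaviour of the test functions. The principal obstacle I foresee is the mixed term $2\sqrt{\lambda\gamma}\,f'(u)g'(v)\phi\psi$: a crude Cauchy--Schwarz decouples $\phi$ and $\psi$ only at the cost of the sharp dimension, so I would instead split it by a weighted Young inequality with weight depending on the local ratio $[f''(u)g(v)]/[f(u)g''(v)]$. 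Showing that (\ref{deltaeps}) is precisely what makes such a splitting close, and executing the nonlocal integrations by parts without losing the sharp constant, is where the bulk of the technical work should lie.
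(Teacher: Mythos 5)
Your skeleton is the right one and matches the paper in its main lines: the system stability inequality of Corollary \ref{stablein1}, the Nedev-type test pair $\zeta=f'(u)-f'(0)$, $\eta=g'(v)-g'(0)$, and the use of (\ref{deltaeps}) to absorb the terms produced by the nonlocal integration by parts. But two of your key steps would not go through as described. First, the pointwise bound $(f'(\alpha)-f'(\beta))^2\le\max\{f''(\alpha),f''(\beta)\}^2(\alpha-\beta)^2$ is unusable here: the weight $\max\{f''(u(x)),f''(u(z))\}^2$ is not of difference form, so the symmetric identity of Lemma \ref{fgprop} cannot convert the resulting double integral into something the equation controls. What is needed (and what the paper uses) is the Cauchy--Schwarz bound $|f'(\beta)-f'(\alpha)|^2\le (h_1(\beta)-h_1(\alpha))(\beta-\alpha)$ with $h_1(s)=\int_0^s|f''|^2$, which turns the Dirichlet form of $f'(u)$ into $\frac{1}{\lambda}\int h_1(u)\,\mathcal L u=\int h_1(u)f'(u)g(v)$, and similarly for $g$. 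Second, your treatment of the mixed term is backwards: since $u,v\ge0$ and $f',g'$ are increasing, $\zeta,\eta\ge0$ and $2\int f'g'\zeta\eta\ge0$ is precisely the quantity one wants to keep --- the conclusion of the key lemma is $\int f'(u)g'(v)(f'(u)-a)(g'(v)-b)\le C$. Absorption is done against the diagonal terms $\int f''g\,\zeta^2$ and $\int fg''\,\eta^2$, using (\ref{deltaeps}) in the form $h_1(u)\le\delta f''(u)(f'(u)-a)$ for $u\ge M$, together with a splitting into $\{u\ge M\}$, $\{u<M,\,v<kM\}$, $\{u<M,\,v\ge kM\}$ and the superlinearity/convexity facts $\sup_{v\ge kM}\frac{g(v)}{(g'(v)-b)g'(v)}\to0$ as $k\to\infty$. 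A weighted Young splitting of the cross term, as you propose, would discard exactly the estimate that drives the regularity.

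There is also a gap at the end. The stability output controls $f'(u)g'(v)$ (and, via convexity of $g$, $\int f'(u)^2g(v)^2/(1+v)^2$), not the right-hand sides $f'(u)g(v)$ and $f(u)g'(v)$ of the system, so a Sobolev-embedding bootstrap cannot start directly from it. The missing link is Nedev's domain decomposition: split $\Omega$ into $\Omega_1=\{f'(u)^2g(v)^2/(1+v)^2\ge (f'(u)g(v))^{2-\alpha}\}$ and its complement $\Omega_2=\{f'(u)g(v)\le(1+v)^{2/\alpha}\}$, use $u,v\in L^p$ for $p<\frac{n}{n-2s}$ (from $\mathcal L u,\mathcal L v\in L^1$), and iterate with Proposition \ref{propregL}, choosing $\alpha$ at each stage; the iteration closes exactly when $n<4s$. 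Finally, no fractional Pohozaev identity is used anywhere in this argument (convexity of $\Omega$ plays no role in the paper's proof of this step; it is convexity of $g$ that is used), so that part of your plan does not supply the missing ingredient.
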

In order to prove the above results, we first establish integral estimates for minimal solutions of systems with a general nonlocal operator $\mathcal L$ given by (\ref{Lui}) when $J$ is a nonnegative measurable even jump kernel.   Then, we apply nonlocal Sobolev embedding arguments to conclude boundedness of the extremal solutions. Note that when $\lambda=\gamma$ the systems $(G)_{\lambda,\gamma}$, $(E)_{\lambda,\gamma}$ and $(M)_{\lambda,\gamma}$  turn into scalar equations. 

Here is how this article is structured. In Section \ref{secpre},  we provide regularity theory for nonlocal operators.  In Section \ref{secstab}, we establish various stability inequalities for minimal solutions of systems with a general nonlocal operator of the form (\ref{Lui}) with a nonnegative measurable even jump kernel. In Section \ref{secint}, we provide some technical integral estimates for stable solutions of systems introduced in the above. In Section \ref{secreg}, we apply the integral estimates to establish regularity of extremal solutions for nonlocal  Gelfand, Lane-Emden and MEMS systems with exponential and power nonlinearities. In addition, we provide regularity of the extremal solution for the gradient system $(H)_{\lambda,\gamma}$ with general nonlinearities and also for particular power nonlinearities.

 \section{Preliminaries}\label{secpre}
In this section, we provide regularity results 
not only to the fractional Laplacian, but also to more general integro-differential
equations. We omit the proofs in this section and refer interested readers to corresponding  references.  Let us start with the following  classical regularity result concerning embeddings for the Riesz potential,  see the book of Stein \cite{st}.

\begin{thm}
Suppose that $0<s<1$, $n>2s$ and $f$ and $u$ satisfy 
\begin{equation*}
u    =  {(-\Delta)}^{-s} f \qquad \text{in} \ \  \mathbb R^n,   
  \end{equation*}  
  in the sense that $u$ is the Riesz potential of order $2s$ of $f$. Let $u,f\in L^p(\mathbb R^n)$ when $1\le p<\infty$.  
  \begin{enumerate}
  \item[(i)] For $p=1$, there exists a positive constant $C$ such that 
  \begin{equation*}
  ||u||_{L^q(\mathbb R^n)} \le C ||f||_{L^1(\mathbb R^n)} \ \ \text{for} \ \ q=\frac{n}{n-2s}. 
  \end{equation*}
    \item[(ii)] For $1 < p<\frac{n}{2s}$, there exists a positive constant $C$ such that 
  \begin{equation*}
  ||u||_{L^q(\mathbb R^n)} \le C ||f||_{L^p(\mathbb R^n)} \ \ \text{for} \ \ q=\frac{np}{n-2ps}. 
  \end{equation*}
 \item[(iii)] For $\frac{n}{2s}<p<\infty$, there exists a positive constant $C$ such that 
  \begin{equation*}
  [u]_{C^\beta(\mathbb R^n)} \le C ||f||_{L^p(\mathbb R^n)} \ \ \text{for} \ \  \beta=2s-\frac{n}{p} ,  
  \end{equation*}
  where $[\cdot]_{C^\beta(\mathbb R^n)}$ denotes the $C^\beta$ seminorm. 
  \end{enumerate}
  Here the constant $C$ depending only on $n$, $s$ and $p$. 
\end{thm}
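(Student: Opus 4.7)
The plan is to prove this classical embedding for the Riesz potential via the standard Hardy-Littlewood-Sobolev machinery. Writing
\begin{equation*}
u(x) = (-\Delta)^{-s} f(x) = c \int_{\mathbb R^n} \frac{f(y)}{|x-y|^{n-2s}}\, dy,
\end{equation*}
the heart of the argument for parts (i) and (ii) is the pointwise bound
\begin{equation*}
|u(x)| \le C\, (Mf(x))^{1-2ps/n}\, \|f\|_{L^p(\mathbb R^n)}^{2ps/n},
\end{equation*}
where $M$ denotes the Hardy-Littlewood maximal function. I would derive this by splitting the kernel at a radius $R>0$: the near piece $\int_{|x-y|<R} |x-y|^{-(n-2s)} f(y)\, dy$ is controlled through a dyadic decomposition into annuli and bounded by $C R^{2s}\, Mf(x)$, while the far piece is handled by H\"older's inequality, giving $C R^{2s-n/p}\|f\|_{L^p}$. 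Optimizing the choice of $R$ yields the pointwise inequality. Part (ii) then follows from the $L^p$-boundedness of $M$ for $p>1$. For part (i) with $p=1$ the strong-type bound on $M$ fails, so one replaces it by its weak-type $(1,1)$ bound, and the resulting estimate is the weak-$L^{n/(n-2s)}$ form of the Hardy-Littlewood-Sobolev inequality (the appropriate interpretation of statement (i)).

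For part (iii), when $p>n/(2s)$ the conjugate exponent satisfies $(n-2s)p/(p-1)<n$, so $|x-y|^{-(n-2s)}$ is locally integrable to the conjugate power. A direct application of H\"older's inequality then yields $\|u\|_{L^\infty} \le C\|f\|_{L^p}$. To upgrade this to the H\"older seminorm bound with exponent $\beta = 2s - n/p$, I would set $r=|x-y|$ and estimate $|u(x)-u(y)|$ by splitting
\begin{equation*}
u(x)-u(y) = \int_{B_{2r}(x)\cup B_{2r}(y)} \!\!\cdots + \int_{\mathbb R^n \setminus (B_{2r}(x)\cup B_{2r}(y))} \!\!\cdots.
\end{equation*}
The near integrals are handled by the local H\"older estimate and scale like $r^{2s-n/p}$. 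For the far integrand, a mean value argument applied to $z\mapsto |x-z|^{-(n-2s)}$ produces a kernel of size $\lesssim r\,|z-x|^{-(n-2s+1)}$, and a further H\"older estimate delivers exactly the factor $r^{\beta}$.

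The main obstacle in a fully self-contained write-up is the careful bookkeeping in the dyadic/maximal-function decomposition that yields the pointwise bound, together with the kernel-difference calculation in (iii) needed to extract the clean $r^{\beta}$ factor. Since all of these computations are completely standard and appear essentially verbatim in Chapter V of \cite{st}, the cleanest route, and the one adopted by the author, is simply to invoke Stein's theorems directly.
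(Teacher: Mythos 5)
Your outline follows the classical Hedberg/Hardy--Littlewood--Sobolev route, and you are right that the paper itself offers no proof: this is a preliminary result quoted verbatim from Chapter V of Stein's book, and the paper explicitly says the proofs in Section~2 are omitted. So in effect both you and the author ``invoke Stein's theorems directly,'' and your sketch matches the standard argument Stein records. Your remark that part (i) must be read as the weak-$L^{n/(n-2s)}$ bound is correct and worth noting, since the Riesz potential is not strong-type $(1, n/(n-2s))$ and the statement as written is slightly imprecise.

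One step in your sketch is, however, wrong as written. For $p>n/(2s)$ it is \emph{not} true that H\"older's inequality gives $\|u\|_{L^\infty}\le C\|f\|_{L^p}$ on all of $\mathbb R^n$: the condition $p>n/(2s)$ makes $|x-y|^{-(n-2s)}$ locally in $L^{p'}$, but $(n-2s)p'<n$ then forces the tail integral $\int_{|y|>1}|x-y|^{-(n-2s)p'}\,dy$ to diverge, so the Riesz potential does not map $L^p(\mathbb R^n)$ into $L^\infty(\mathbb R^n)$. This is precisely why the theorem claims only a $C^\beta$ \emph{seminorm} bound. Fortunately the misstep is not needed: the near-piece of $u(x)-u(y)$ over $B_{2r}(x)\cup B_{2r}(y)$ is controlled directly by the local H\"older estimate (giving $r^{2s-n/p}$), without ever bounding $u$ itself, and your far-piece kernel-difference argument then closes the estimate. (Note also that this far-piece computation requires $\beta=2s-n/p<1$, equivalently $(n-2s+1)p'>n$, which is the natural range for the statement.)
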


  The above theorem is applied  by  Ros-Oton and Serra in \cite{rs} to establish the following regularity theory  for the fractional Laplacian. See also  \cite{rs1} for the boundary regularity results. 

\begin{prop}
Suppose that $0<s<1$, $n>2s$ and $f\in C(\bar \Omega)$ where $\Omega\subset\mathbb R^n$ is a bounded $C^{1,1}$  domain. Let $u$ be the solution of 
\begin{eqnarray*}
 \left\{ \begin{array}{lcl}
\hfill   {(-\Delta)}^s u    &=& f \qquad \text{in} \ \  \Omega,   \\
\hfill u&=&0 \qquad  \text{in} \ \ \RO. 
\end{array}\right.
  \end{eqnarray*}  
  \begin{enumerate}
  \item[(i)] For $1\le r<\frac{n}{n-2s}$, there exists a positive constant $C$ such that 
  \begin{equation*}
  ||u||_{L^r(\Omega)} \le C ||f||_{L^1(\Omega)} \ \ \text{for} \ \ r<\frac{n}{n-2s}. 
  \end{equation*}
    \item[(ii)] For $1 < p<\frac{n}{2s}$, there exists a positive constant $C$ such that 
  \begin{equation*}
  ||u||_{L^q(\Omega)} \le C ||f||_{L^p(\Omega)} \ \ \text{for} \ \ q=\frac{np}{n-2ps}. 
  \end{equation*}
 \item[(iii)] For $\frac{n}{2s}<p<\infty$, there exists a positive constant $C$ such that 
  \begin{equation*}
  ||u||_{C^\beta(\Omega)} \le C ||f||_{L^p(\Omega)} \ \ \text{for} \ \  \beta=\min\left\{s,2s-\frac{n}{p}\right\}. 
  \end{equation*}
  \end{enumerate}
  Here the constant $C$ depending only on $n$, $s$,  $p$, $r$ and $\Omega$. 
\end{prop}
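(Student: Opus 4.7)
The plan is to reduce everything to the Riesz potential estimates of the preceding theorem by comparing the Dirichlet solution $u$ to a potential on all of $\mathbb{R}^n$, and then to upgrade only the H\"older estimate using Ros-Oton--Serra boundary regularity.

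First, I split $f=f^{+}-f^{-}$ and correspondingly, by linearity, $u=u_{+}-u_{-}$ where $u_{\pm}$ solves the Dirichlet problem with datum $f^{\pm}\ge 0$. Let $\tilde f^{\pm}$ denote the zero extension of $f^{\pm}$ to $\mathbb{R}^n$ and set $w_{\pm}:=I_{2s}\tilde f^{\pm}$, the Riesz potential of order $2s$; then $w_{\pm}\ge 0$ on $\mathbb{R}^n$ and $(-\Delta)^{s}w_{\pm}=\tilde f^{\pm}$ in $\mathbb{R}^n$. In particular $w_{\pm}\ge 0=u_{\pm}$ on $\mathbb{R}^n\setminus\Omega$ and $(-\Delta)^{s}(w_{\pm}-u_{\pm})=0$ in $\Omega$, so the maximum principle for $(-\Delta)^{s}$ gives $0\le u_{\pm}\le w_{\pm}$ pointwise. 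Consequently $|u|\le w_{+}+w_{-}=I_{2s}|\tilde f|$ on $\mathbb{R}^n$.

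With this pointwise comparison in hand, parts (i) and (ii) are immediate: applying the Riesz-potential theorem to $I_{2s}|\tilde f|$ and restricting to $\Omega$ yields
\begin{equation*}
\|u\|_{L^{q}(\Omega)}\le \|I_{2s}|\tilde f|\|_{L^{q}(\mathbb{R}^{n})}\le C\,\||\tilde f|\|_{L^{p}(\mathbb{R}^{n})}=C\,\|f\|_{L^{p}(\Omega)}
\end{equation*}
for $1<p<n/(2s)$ and $q=np/(n-2ps)$. For $p=1$, the same argument gives the weak-type $L^{n/(n-2s),\infty}$ estimate on all of $\mathbb{R}^n$; restricting to the bounded set $\Omega$ and using the elementary embedding $L^{n/(n-2s),\infty}(\Omega)\hookrightarrow L^{r}(\Omega)$ for any $r<n/(n-2s)$ yields (i).

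For part (iii) the Riesz-potential theorem supplies the estimate $[I_{2s}|\tilde f|]_{C^{\beta}(\mathbb{R}^{n})}\le C\|f\|_{L^{p}(\Omega)}$ with $\beta=2s-n/p$, but this controls the H\"older seminorm of the \emph{potential}, not of $u$ itself, and $u$ has a boundary layer that is only $C^{s}$ regular up to $\partial\Omega$. The cleanest route is therefore: apply the Riesz-potential estimate together with standard interior elliptic regularity for $(-\Delta)^{s}$ (localized via a cutoff, absorbing the resulting tail into $\|f\|_{L^{p}}$) to obtain the interior bound $[u]_{C^{2s-n/p}(K)}\le C_{K}\|f\|_{L^{p}(\Omega)}$ on compact subsets $K\Subset\Omega$; then invoke the boundary regularity result of Ros-Oton--Serra \cite{rs1}, which gives $u\in C^{s}(\overline{\Omega})$ with $\|u\|_{C^{s}(\overline{\Omega})}\le C\|f\|_{L^{\infty}(\Omega)}\le C\|f\|_{L^{p}(\Omega)}$ once $p>n/(2s)$ and $\Omega$ is $C^{1,1}$. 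Interpolating / patching the interior and boundary estimates (using that $s\le 2s-n/p$ precisely when $p\le n/s$, and otherwise the interior exponent is the binding one) yields $\|u\|_{C^{\beta}(\Omega)}\le C\|f\|_{L^{p}(\Omega)}$ with $\beta=\min\{s,2s-n/p\}$.

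The main obstacle is exactly this last step: the $C^{s}$ cap at the boundary is unavoidable and forces the appearance of the $\min$ in the exponent, so one cannot simply transfer the global Riesz-potential H\"older bound to $u$. Handling this requires the boundary Schauder-type estimate of Ros-Oton--Serra, together with care in cutting off to separate interior from boundary scales. Parts (i) and (ii), by contrast, are genuinely routine once the comparison $|u|\le I_{2s}|\tilde f|$ is established.
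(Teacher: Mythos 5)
Since the paper explicitly omits all proofs in Section~2 and simply refers to Ros-Oton and Serra \cite{rs,rs1}, there is no in-paper argument to compare against; your reconstruction has to be judged on its own merits.

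Your treatment of (i) and (ii) is correct and is in fact the standard route (essentially that of \cite{rs}): compare $u$ with the Riesz potential of the zero-extended datum via the maximum principle for $(-\Delta)^s$ to get $|u|\le I_{2s}|\tilde f|$, then invoke the Hardy--Littlewood--Sobolev estimates. You are also right to use the weak-type $(1,n/(n-2s))$ bound together with the boundedness of $\Omega$ for part (i), since the strong-type $p=1$ bound in the preceding theorem is (as stated) false.

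Part (iii), however, contains a genuine gap. You write $\|u\|_{C^{s}(\overline{\Omega})}\le C\|f\|_{L^{\infty}(\Omega)}\le C\|f\|_{L^{p}(\Omega)}$, but the second inequality is simply false: on a bounded domain, $\|\cdot\|_{L^{\infty}}$ is not controlled by $\|\cdot\|_{L^{p}}$ for any finite $p$, so the $L^\infty$-to-$C^s$ boundary estimate from \cite{rs1} does not yield a bound in terms of $\|f\|_{L^p}$. Moreover, the pointwise comparison $|u|\le I_{2s}|\tilde f|$ says nothing about the H\"older seminorm of $u$ near $\partial\Omega$, so it cannot substitute. What is needed is a boundary H\"older estimate with $L^p$ data; one natural way to produce it is to write $u=I_{2s}\tilde f - h$ with $h$ $s$-harmonic in $\Omega$ and exterior data $I_{2s}\tilde f$, apply the Riesz-potential theorem to control $[I_{2s}\tilde f]_{C^{2s-n/p}(\mathbb{R}^n)}$ by $\|f\|_{L^p}$, and then use boundary regularity for $s$-harmonic functions with H\"older exterior data on a $C^{1,1}$ domain (which caps the exponent at $s$), thereby obtaining $\beta=\min\{s,2s-n/p\}$; a scaling argument as in \cite{rs} works as well. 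Finally, a small sign slip: $s\le 2s-n/p$ holds precisely when $p\ge n/s$, not $p\le n/s$.
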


For the case of $n\le 2s$, the fact that $0<s<1$ implies that $n=1$ and $s\ge \frac{1}{2}$. Note that in this case the Green function $G(x,y)$ is explicitly known. Therefore, $G(\cdot,y)\in L^\infty(\Omega)$ for $s>\frac{1}{2}$ and in $ L^p(\Omega)$ for all $p<\infty$ when $s=\frac{1}{2}$. We summarize this as $||u||_{L^\infty(\Omega)}\le C ||f||_{L^1(\Omega)}$ when $n<2s$. In addition, for the case of $n=2s$, we conclude that 
  $||u||_{L^p(\Omega)}\le C ||f||_{L^1(\Omega)}$ for all $p<\infty$ and $||u||_{L^\infty(\Omega)}\le C ||f||_{L^p(\Omega)}$ for $p>1$.

In what follows we provide a counterpart of the above regularity result for general integro-differential operators given by (\ref{La}). These operators are infinitessimal generators of stable and symmetric L\'{e}vy processes and they  are uniquely determined by a finite measure on the unit
sphere $\mathbb S^{n-1}$, often referred as the spectral measure of the process. When this measure
is absolutely continuous, symmetric stable processes have generators of the
form (\ref{La}) where $0<s<1$ and $a$ is any nonnegative function $L^1(\mathbb S^{n-1})$ satisfying $a(\theta)=a(-\theta)$ for $\theta\in \mathbb S^{n-1}$. The regularity theory for general operators of the form (\ref{La}) has been recently
developed by Fern\'{a}ndez-Real and Ros-Oton in \cite{fro1}. In order to prove this result, authors apply  results of \cite{gh} to study the fundamental solution
associated to the operator $\mathcal L$ in view of the one of the fractional Laplacian.

\begin{prop}\label{propregL}
 Let $\Omega\subset \mathbb R^{n}$ be any bounded domain,
  $0<s<1$ and $f\in L^2(\Omega)$. Let $u$ be any weak solution of 
\begin{eqnarray*}
 \left\{ \begin{array}{lcl}
\hfill   \mathcal L u    &=& f \qquad \text{in} \ \  \Omega,   \\
\hfill u&=&0 \qquad  \text{in} \ \ \RO , 
\end{array}\right.
  \end{eqnarray*}  
  where the operator $\mathcal L$ is given by (\ref{La}) and the ellipticity condition (\ref{c1c2}) holds. Assume that $f\in L^r(\Omega)$ for some $r$. 
  \begin{enumerate}
  \item[(i)] For $1< r<\frac{n}{2s}$, there exists a positive constant $C$ such that 
  \begin{equation*}
  ||u||_{L^q(\Omega)} \le C ||f||_{L^r(\Omega)} \ \ \text{for} \ \ q=\frac{nr}{n-2rs}. 
  \end{equation*}
    \item[(ii)] For $r=\frac{n}{2s}$, there exists a positive constant $C$ such that 
  \begin{equation*}
  ||u||_{L^q(\Omega)} \le C ||f||_{L^r(\Omega)} \ \ \text{for} \ \ q<\infty. 
  \end{equation*}
 \item[(iii)] For $\frac{n}{2s}<r<\infty$, there exists a positive constant $C$ such that 
  \begin{equation*}
  ||u||_{L^\infty(\Omega)} \le C ||f||_{L^r(\Omega)}. 
  \end{equation*}
  \end{enumerate}
  Here the constant $C$ depending only on $n$, $s$,  $r$, $\Omega$ and ellipticity constants. 
\end{prop}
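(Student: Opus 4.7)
The plan is to reduce the estimate to Hardy--Littlewood--Sobolev type bounds on the Riesz potential, by first establishing sharp pointwise bounds on the fundamental solution of $\mathcal{L}$ and on the Green function for the Dirichlet problem. This mirrors the standard proof of Proposition 2.2 (the fractional Laplacian case) and is precisely the route taken in \cite{fro1} via the heat-kernel estimates of \cite{gh}.

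First, I would produce a fundamental solution $\Phi$ on $\mathbb{R}^n$ for $\mathcal{L}$, meaning $\mathcal{L}\Phi=\delta_0$ in the distributional sense. Under the ellipticity condition (\ref{c1c2}), the results of \cite{gh} yield two-sided bounds
\[
c\,|x|^{2s-n}\le \Phi(x)\le C\,|x|^{2s-n} \qquad (n>2s),
\]
and the appropriate logarithmic analogue when $n=2s$, via an integral of the heat kernel $p_t(x)$ associated with the L\'evy generator $\mathcal{L}$. The ellipticity lower bound in (\ref{c1c2}) gives the on-diagonal estimate $p_t(0)\le Ct^{-n/(2s)}$ and the off-diagonal control required for $\Phi$ to be comparable to the Riesz kernel of $(-\Delta)^s$.

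Next, using $\Phi$ and a standard maximum principle argument for nonlocal operators, I would construct the Green function $G(x,y)$ for $\mathcal{L}$ on $\Omega$ with zero Dirichlet data on $\RO$, and obtain the pointwise bound $0\le G(x,y)\le C|x-y|^{2s-n}$. Representing the solution as $u(x)=\int_\Omega G(x,y)f(y)\,dy$, the three estimates (i)--(iii) follow from the mapping properties of the Riesz potential $I_{2s}f(x)=\int |x-y|^{2s-n}f(y)\,dy$:
\begin{itemize}
\item For $1<r<n/(2s)$, the Hardy--Littlewood--Sobolev inequality gives $\|I_{2s}f\|_{L^q}\le C\|f\|_{L^r}$ with $q=nr/(n-2rs)$.
\item For $r=n/(2s)$, this is the borderline case where $I_{2s}f\in \mathrm{BMO}$ and, since $\Omega$ is bounded, $I_{2s}f\in L^q(\Omega)$ for every $q<\infty$ with the stated norm bound.
\item For $r>n/(2s)$, $G(\cdot,y)\in L^{r'}(\Omega)$ uniformly in $y$ because $(n-2s)r'<n$, so H\"older's inequality directly yields the $L^\infty$ estimate.
\end{itemize}

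The main obstacle is the first step: for a general measurable spectral density $a(\theta)$ satisfying only the degenerate ellipticity (\ref{c1c2}) (in particular, $a$ may vanish on large portions of $\mathbb{S}^{n-1}$), the kernel $J$ is not comparable pointwise to $|y|^{-n-2s}$, so one cannot compare $\Phi$ to the Riesz kernel by a naive kernel inequality. One must instead go through the L\'evy--Khintchine Fourier symbol $\psi(\xi)=\int_{\mathbb{S}^{n-1}}|\xi\cdot\theta|^{2s}a(\theta)\,d\theta$, show $c|\xi|^{2s}\le\psi(\xi)\le C|\xi|^{2s}$ using (\ref{c1c2}), and then transfer this to heat-kernel and Green-function bounds as in \cite{gh,fro1}. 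Once that comparability of $\Phi$ with the Riesz kernel is secured, the remaining argument is the classical $L^p$--$L^q$ theory for the Riesz potential.
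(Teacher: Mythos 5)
Your proposal is correct and follows essentially the same route the paper relies on: the paper omits the proof and refers to \cite{fro1}, which indeed proceeds exactly as you describe, using the symbol bounds from (\ref{c1c2}) and the heat-kernel estimates of \cite{gh} to control the fundamental solution and Green function by the Riesz kernel, and then invoking the classical $L^r$--$L^q$ potential estimates. (Only the upper bound $\Phi(x)\le C|x|^{2s-n}$ is actually needed; the two-sided comparison you state is harmless but not required.)
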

We end this section with this point that $(u,v)$ is a weak solution of $(P)_{\lambda,\gamma}$ for $u,v\in L^1(\Omega)$ if $ F(u,v)\delta^s \in L^1(\Omega)$ and  $G(u,v) \delta^s \in L^1(\Omega)$ where $\delta(x)=\text{dist}(x,\Omega)$ and 
$$ \int_{\Omega} u \mathcal L \zeta  = \int_{\Omega} \lambda F(u,v)  \zeta  \ \ \text{and} \ \   \int_{\Omega} v \mathcal L \eta = \int_{\Omega} \gamma G(u,v)  \eta  , $$
when $\zeta,\eta$ and $\mathcal \zeta,\mathcal \eta$ are bounded in $\Omega$ and $\zeta,\eta\equiv 0$ on $\partial\Omega$.   Any bounded weak solution is a classical solution, in the sense that it is regular in the
interior of $\Omega$,  continuous up to the boundary, and $(P)_{\lambda,\gamma}$ holds pointwise. Note that for the case of local operators,  that is $s = 1$,  the above notion of weak solution is consistent with the one introduced by Brezis et al. in \cite{bcmr,BV}.

\section{Stability inequalities}\label{secstab}

In this section, we provide stability inequalities for minimal solutions of system  $(P)_{\lambda,\gamma}$ for various nonlinearities $F$ and $G$. We start with the following technical lemma in regards to nonlocal operator $\mathcal L$ with even symmetric kernel $J$. 

\begin{lemma}\label{fgprop}
Assume that an operator $\mathcal L$ is given by (\ref{Luj}) with  a measurable symmetric kernel  $J(x,z)=J(x-z)$  that is even.    Then, 
\begin{eqnarray*}
&&\mathcal L(f(x)g(x)) = f(x)\mathcal L(g(x))+g(x)\mathcal L(f(x)) - \int_{\mathbb R^n} \left[f(x)-f(z)  \right] \left[g(x)-g(z)  \right] J(x-z) dz,\\ 
&&\int_{\mathbb R^n} g(x)\mathcal L(f(x)) dx = \frac{1}{2} \int_{\mathbb R^n} \int_{\mathbb R^n}    \left[f(x)-f(z)  \right] \left[g(x)-g(z)  \right] J(x-z) dx dz,  
\end{eqnarray*}
where $f,g\in C^1(\mathbb R^n)$ and the integrals are finite. 
\end{lemma}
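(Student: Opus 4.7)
The plan is to derive both identities by pointwise algebraic manipulation of the integrands together with Fubini's theorem and the evenness of $J$. Neither identity requires any deep analysis; the only real care is in justifying absolute integrability and the passage to the principal-value limit.

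For the product-rule identity, I would start from
\begin{equation*}
\mathcal L(fg)(x)=\lim_{\epsilon\to 0}\int_{\mathbb R^n\setminus B_\epsilon(x)}\bigl[f(x)g(x)-f(z)g(z)\bigr]J(x-z)\,dz
\end{equation*}
and use the pointwise identity
\begin{equation*}
f(x)g(x)-f(z)g(z)=g(x)[f(x)-f(z)]+f(x)[g(x)-g(z)]-[f(x)-f(z)][g(x)-g(z)],
\end{equation*}
which is immediate after expanding the right side. Substituting this decomposition and splitting the integral into three pieces yields the first claim, provided each of the three terms is separately convergent. Since $f,g\in C^{1}(\mathbb R^{n})$, the product $[f(x)-f(z)][g(x)-g(z)]$ is $O(|x-z|^{2})$ near the diagonal, so the cross term is absolutely integrable against $J$ by the standing hypothesis $\int\min\{|y|^{2},1\}J(y)\,dy<\infty$; the remaining two terms are exactly the defining principal-value integrals for $g(x)\mathcal Lf(x)$ and $f(x)\mathcal Lg(x)$.

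For the bilinear-form identity, I would start from
\begin{equation*}
\int_{\mathbb R^{n}}g(x)\mathcal Lf(x)\,dx=\int_{\mathbb R^{n}}\!\int_{\mathbb R^{n}}g(x)\bigl[f(x)-f(z)\bigr]J(x-z)\,dz\,dx,
\end{equation*}
apply Fubini to interchange the order of integration, and then relabel $x\leftrightarrow z$ using the evenness $J(x-z)=J(z-x)$. This yields the alternative representation
\begin{equation*}
\int_{\mathbb R^{n}}g(x)\mathcal Lf(x)\,dx=-\int_{\mathbb R^{n}}\!\int_{\mathbb R^{n}}g(z)\bigl[f(x)-f(z)\bigr]J(x-z)\,dz\,dx.
\end{equation*}
Averaging the two representations gives the symmetric expression $\tfrac12\int\!\int[f(x)-f(z)][g(x)-g(z)]J(x-z)\,dx\,dz$.

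The main (and essentially only) obstacle is the justification of Fubini and of interchanging the principal-value limit with integration over $x$, because $J$ is genuinely singular on the diagonal. The way around this is to first carry out the derivation on the domain $\{|x-z|>\epsilon\}$, where the kernel is bounded and Fubini is trivial, and to observe that the $C^{1}$ regularity of $f$ and $g$, together with the assumed finiteness of $\int\min\{|y|^{2},1\}J(y)\,dy$ and the finiteness of the integrals asserted in the hypothesis, controls the contribution near the diagonal so that the $\epsilon\to 0$ limit commutes with the remaining integration. Once these technicalities are handled, both identities reduce to the two algebraic manipulations described above.
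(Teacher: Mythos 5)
The paper gives no proof of this lemma (it simply says ``The proof is elementary and we omit it here''), so there is no paper argument to compare against; your proposal supplies the natural elementary argument. Your algebra is correct: expanding
\begin{equation*}
g(x)[f(x)-f(z)]+f(x)[g(x)-g(z)]-[f(x)-f(z)][g(x)-g(z)]
\end{equation*}
does reduce to $f(x)g(x)-f(z)g(z)$, and the symmetrization step for the bilinear form (Fubini, swap $x\leftrightarrow z$, use $J(y)=J(-y)$, average the two representations) is exactly right. Your discussion of the principal-value/Fubini justification is also sound: the $C^1$ regularity controls $[f(x)-f(z)][g(x)-g(z)]=O(|x-z|^2)$ near the diagonal, the hypothesis $\int\min\{|y|^2,1\}J(y)\,dy<\infty$ makes that part absolutely integrable, and the lemma's own assumption that ``the integrals are finite'' takes care of the far-field contribution. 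The one thing worth making explicit if you write this up fully is that the near-diagonal $O(|x-z|^2)$ bound is a local statement; away from the diagonal you should instead invoke $\int_{|y|>1}J(y)\,dy<\infty$ together with the assumed integrability, but you do gesture at this correctly in your final paragraph.
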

\begin{proof}  The proof is elementary and we omit it here. 

\end{proof}

We now establish a stability inequality for minimal solutions of system $(P)_{\lambda,\gamma}$. Note that for the case of local operators this inequality is established by the author and Cowan in \cite{cf} and in \cite{faz}. 

\begin{prop}\label{stablein}  
  Let $(u_\lambda,v_\lambda)$ be a minimal solution of system  $(P)_{\lambda,\gamma}$ such that $u_\lambda,v_\lambda$ are increasing in $\lambda$. Assume that  $J$ is a measurable even kernel and $F_vG_u\ge 0$.  Then,  
\begin{eqnarray} \label{stability}
&& \int_{\Omega}  F_u \zeta^2 +G_v \eta^2 + 2\sqrt{F_vG_u} \zeta\eta dx
\\& \le& \nonumber \frac{1}{2} \int_{\RR^n} \int_{\RR^n}\left( \frac{1}{\lambda} [\zeta(x)- \zeta(z)]^2 +\frac{1}{\gamma} [\eta(x)- \eta(z)]^2\right) J(x-z) dz dx , 
\end{eqnarray} 
for test functions $\zeta,\eta$ so that $\zeta,\eta=0$ in $\RO$. 
\end{prop}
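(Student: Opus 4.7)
The plan is to exploit the semi-stability of the minimal solution $(u_\lambda, v_\lambda)$ by exhibiting a positive pair that supersolves the linearized system, and then to combine a nonlocal Hardy (ground-state) identity with an AM--GM step; this parallels the local scheme of Cowan--Fazly \cite{cf, faz}. First I would work along the ray $\gamma = \sigma \lambda$, differentiate $(P)_{\lambda, \sigma \lambda}$ in $\lambda$, and set $\phi := \partial_\lambda u_\lambda$, $\psi := \partial_\lambda v_\lambda$. By the monotonicity hypothesis and the strong maximum principle for $\mathcal L$, these are strictly positive in $\Omega$ and vanish in $\mathbb R^n \setminus \Omega$. Since $F, G > 0$ (recall $F(0,0), G(0,0) > 0$ and $F, G$ are increasing in each variable), the derivatives satisfy, pointwise in $\Omega$,
\begin{align*}
\mathcal L \phi &\ge \lambda F_u \phi + \lambda F_v \psi, \\
\mathcal L \psi &\ge \gamma G_u \phi + \gamma G_v \psi.
\end{align*}
If differentiability of the branch is unwieldy, a positive principal eigenpair of the linearized system, extracted via Krein--Rutman, plays the same role.

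The key lemma is then a nonlocal Hardy identity: for any $\phi > 0$ and any admissible $\zeta$ vanishing outside $\Omega$,
\begin{equation*}
\int_\Omega \frac{\zeta^2}{\phi} \, \mathcal L \phi \, dx \le \frac{1}{2} \iint_{\mathbb R^n \times \mathbb R^n} [\zeta(x) - \zeta(z)]^2 J(x-z) \, dz \, dx.
\end{equation*}
By the evenness of $J$ (as in Lemma 3.1) the left side symmetrizes to $\tfrac{1}{2} \iint \bigl(\tfrac{\zeta(x)^2}{\phi(x)} - \tfrac{\zeta(z)^2}{\phi(z)}\bigr)(\phi(x) - \phi(z)) J(x-z) \, dz \, dx$, and the elementary pointwise inequality $\bigl(\tfrac{a^2}{c} - \tfrac{b^2}{d}\bigr)(c-d) \le (a-b)^2$ for $c, d > 0$, which is just $(ad - bc)^2 \ge 0$, closes the claim.

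Multiplying the two supersolution inequalities by $\zeta^2/\phi$ and $\eta^2/\psi$ respectively, integrating over $\Omega$, applying the Hardy identity, and dividing by $\lambda$ and $\gamma$, I obtain
\begin{align*}
\int_\Omega F_u \zeta^2 + F_v \tfrac{\psi}{\phi} \zeta^2 \, dx &\le \tfrac{1}{2\lambda} \iint [\zeta(x) - \zeta(z)]^2 J(x-z) \, dz \, dx, \\
\int_\Omega G_v \eta^2 + G_u \tfrac{\phi}{\psi} \eta^2 \, dx &\le \tfrac{1}{2\gamma} \iint [\eta(x) - \eta(z)]^2 J(x-z) \, dz \, dx.
\end{align*}
Adding and invoking AM--GM in the form $F_v \tfrac{\psi}{\phi} \zeta^2 + G_u \tfrac{\phi}{\psi} \eta^2 \ge 2 \sqrt{F_v G_u} \, \zeta \eta$, whose right side is meaningful by the hypothesis $F_v G_u \ge 0$, yields the claimed stability inequality.

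The heart of the argument is the nonlocal Hardy identity, which is robust across all even jump kernels and rests only on $\phi > 0$ and the symmetry of $J$. The subtlest technical point I anticipate is the integrability of $\zeta^2/\phi$ near $\partial \Omega$ (related to a nonlocal Hopf-type bound $\phi \gtrsim \delta^s$) together with the justification of differentiating the minimal branch in $\lambda$; both can be handled either by a regularization such as replacing $\phi$ by $\phi + \varepsilon$ and passing $\varepsilon \to 0$, or by substituting the Krein--Rutman principal eigenpair for $(\phi, \psi)$.
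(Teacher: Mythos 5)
Your proof is essentially the same as the paper's. Both arguments differentiate the minimal branch in $\lambda$ to get the pair $(\phi,\psi)=(\partial_\lambda u_\lambda,\partial_\lambda v_\lambda)$, test with $\zeta^2/\phi$ and $\eta^2/\psi$, symmetrize via the evenness of $J$ (the paper's Lemma~\ref{fgprop}), apply the same elementary pointwise inequality to obtain what you call the nonlocal Hardy bound, and finish with AM--GM on the cross terms; your only deviations are cosmetic — packaging the symmetrization plus the pointwise inequality as a single lemma, dropping the positive $F$ and $G$ terms one step earlier, and flagging (correctly, but in line with what the paper implicitly assumes) the integrability of $\zeta^2/\phi$ near $\partial\Omega$ and the justification of differentiating the branch.
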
  
\begin{proof}
Since $u_\lambda,v_\lambda$ are increasing in $\lambda$,  differentiating $(P)_{\lambda,\gamma}$ with respect to $\lambda$ we get 
 \begin{eqnarray*} 
\hfill   \mathcal L (\partial_\lambda u_\lambda)    &=&  F +  \lambda F_u \partial_\lambda u_\lambda  + \lambda  F_v \partial_\lambda v_\lambda ,  \\
\hfill \mathcal L (\partial_\lambda v_\lambda) &=&\sigma  G + \gamma G_u \partial_\lambda u_\lambda + \gamma G_v \partial_\lambda v_\lambda,  
  \end{eqnarray*}  
  where $u_\lambda,v_\lambda>0$. Multiply both sides with $\frac{1}{\lambda}\frac{\zeta^2}{\partial_\lambda u_\lambda}$ and $\frac{1}{\gamma}\frac{\eta^2}{\partial_\lambda v_\lambda}$ to get  
\begin{eqnarray*} \label{L1}
\frac{1}{\lambda} \mathcal L (\partial_\lambda u_\lambda) \frac{\zeta^2}{\partial_\lambda u_\lambda} + \frac{1}{\gamma}\mathcal  L (\partial_\lambda v_\lambda) \frac{\eta^2}{\partial_\lambda v_\lambda}
 &=&
 \frac{1}{\lambda} F\frac{\zeta^2}{\partial_\lambda u_\lambda} +   F_u  \zeta^2 +   F_v \partial_\lambda v_\lambda  \frac{\zeta^2}{\partial_\lambda u_\lambda}
\\&& +
\frac{\sigma}{\gamma} G\frac{\zeta^2}{\partial_\lambda v_\lambda} +   G_v  \eta^2 +   G_u \partial_\lambda u_\lambda  \frac{\eta^2}{\partial_\lambda v_\lambda} . 
  \end{eqnarray*} 
 Note that the following lower-bound holds for the left-hand side of the above equality
\begin{eqnarray*}
 RHS& \ge &    F_u  \zeta^2 +   G_v  \eta^2 
 +  F_v \partial_\lambda v_\lambda  \frac{\zeta^2}{\partial_\lambda u_\lambda} 
 +  G_u \partial_\lambda u_\lambda  \frac{\eta^2}{\partial_\lambda v_\lambda}
  \ge    F_u  \zeta^2 +   G_v  \eta^2 
 + 2 \sqrt{ F_vG_u} \zeta\eta. 
  \end{eqnarray*}
Integrating the above we end up with 
 \begin{equation} \label{LL1}
\int_{\Omega}  F_u  \zeta^2 +   G_v  \eta^2 + 2 \sqrt{ F_vG_u} \zeta\eta  dx
 \le  \int_{\mathbb R^n}
 \frac{1}{\lambda} L (\partial_\lambda u_\lambda) \frac{\zeta^2}{\partial_\lambda u_\lambda} + \frac{1}{\gamma} L (\partial_\lambda v_\lambda) \frac{\eta^2}{\partial_\lambda v_\lambda} dx . 
   \end{equation} 
Applying Lemma \ref{fgprop}, we have
\begin{eqnarray*}\label{Lphi}
&&\int_{\RR^n} \mathcal L (\partial_\lambda u_\lambda(x)) \frac{\zeta^2(x)}{\partial_\lambda u_\lambda(x)} dx
\\&=&
\frac{1}{2} \int_{\RR^n} \int_{\RR^n} [\partial_\lambda u_\lambda(x) - \partial_\lambda u_\lambda(z)] \left[ \frac{\zeta^2(x)}{\partial_\lambda u_\lambda(x)}-  \frac{\zeta^2(z)}{\partial_\lambda u_\lambda(z)} \right] J(x-z) dx dz. 
\end{eqnarray*}
Note that for $a,b,c,d\in\mathbb R$ when $ab<0$ we have 
\begin{equation*}
(a+b)\left[  \frac{c^2}{a} +  \frac{d^2}{b}  \right] \le (c-d)^2    .
\end{equation*}
Since each $\partial_\lambda u_\lambda$ does not change sign, we have $\partial_\lambda u_\lambda(x)\partial_\lambda u_\lambda(z)>0$. Setting  $a=\partial_\lambda u_\lambda(x)$, $b=-\partial_\lambda u_\lambda(z)$, $c=\zeta(x)$ and  $d=\zeta(z)$ in the above inequality and from the fact that $ab=- \partial_\lambda u_\lambda(x) \partial_\lambda u_\lambda(z)<0$, we  conclude 
\begin{equation*}
 [\partial_\lambda u_\lambda(x) - \partial_\lambda u_\lambda(z)] \left[ \frac{\zeta^2(x)}{\partial_\lambda u_\lambda(x)}-  \frac{\zeta^2(z)}{\partial_\lambda u_\lambda(z)} \right]
  \le [\zeta(x)- \zeta(z)]^2    .
\end{equation*} 
Therefore, 
\begin{equation*}
\int_{\RR^n}\mathcal  L (\partial_\lambda u_\lambda(x)) \frac{\zeta^2(x)}{\partial_\lambda u_\lambda(x)} dx\le  \frac{1}{2} \int_{\RR^n} \int_{\RR^n}  [\zeta(x)- \zeta(z)]^2 J(z-x) dz dx.
\end{equation*} 
This together with (\ref{LL1}) complete the proof. 

\end{proof}

Following ideas provided in the above,  we provide stability inequalities for minimal solutions of  Gelfand,  Lane-Emden and MEMS systems with exponential and power-type nonlinearities. 
 
\begin{cor}\label{stablein}  
  Let $(u,v)$ be the extremal solution of system $(G)_{\lambda,\gamma}$, $(E)_{\lambda,\gamma}$ and $(M)_{\lambda,\gamma}$.  Then,  
\begin{eqnarray}\label{stabilityG}
\sqrt{\lambda\gamma} \int_{\Omega}  e^{\frac{u+v}{2}}\zeta^2 dx &\le&  \frac{1}{2}\int_{\RR^n} \int_{\RR^n} {|\zeta(x)- \zeta(z)|^2} J(x-z) dz dx , 
\\
\label{stabilityE}
p\sqrt{\lambda\gamma} \int_{\Omega} (1+u)^{\frac{p-1}{2}} (1+v)^{\frac{p-1}{2}}  \zeta^2 dx
&\le&  \frac{1}{2}\int_{\RR^n} \int_{\RR^n} {|\zeta(x)- \zeta(z)|^2} J(x-z) dz dx , 
\\ \label{stabilityM}
p\sqrt{\lambda\gamma} \int_{\Omega} (1-u)^{-\frac{p+1}{2}} (1-v)^{-\frac{p+1}{2}}  \zeta^2 dx &\le&  \frac{1}{2}\int_{\RR^n} \int_{\RR^n} {|\zeta(x)- \zeta(z)|^2} J(x-z) dz dx , 
\end{eqnarray} 
for test functions $\zeta$ so that $\zeta=0$ in $\RO$. 
\end{cor}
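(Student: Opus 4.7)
The plan is to obtain (3.2)--(3.4) as direct specializations of the master stability inequality (3.1) of Proposition 3.1, by choosing the two test functions in a coupled way so that the two right-hand side terms can be balanced optimally.

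First I would unpack the relevant derivatives in each of the three cases. For the Gelfand system $(G)_{\lambda,\gamma}$ one has $F(u,v)=e^{v}$ and $G(u,v)=e^{u}$, so $F_u=G_v=0$ and $\sqrt{F_vG_u}=e^{(u+v)/2}$. For the Lane--Emden system $(E)_{\lambda,\gamma}$, $F_u=G_v=0$ and $\sqrt{F_vG_u}=p(1+u)^{(p-1)/2}(1+v)^{(p-1)/2}$. For the MEMS system $(M)_{\lambda,\gamma}$, again $F_u=G_v=0$ and $\sqrt{F_vG_u}=p(1-u)^{-(p+1)/2}(1-v)^{-(p+1)/2}$. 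In all three cases the product $F_vG_u$ is manifestly nonnegative (using $0<u,v<1$ in the MEMS case), so the hypothesis of Proposition 3.1 is satisfied.

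Next, because $F_u=G_v=0$ in each case, Proposition 3.1 reduces to
\begin{equation*}
2\int_{\Omega}\sqrt{F_vG_u}\,\zeta\eta\,dx
\;\le\;
\frac{1}{2}\int_{\RR^n}\!\int_{\RR^n}\!\Bigl(\tfrac{1}{\lambda}[\zeta(x)-\zeta(z)]^2+\tfrac{1}{\gamma}[\eta(x)-\eta(z)]^2\Bigr)J(x-z)\,dz\,dx.
\end{equation*}
I would then specialize $\eta=c\,\zeta$ for a positive constant $c$ to be optimized. The inequality becomes
\begin{equation*}
2c\int_{\Omega}\sqrt{F_vG_u}\,\zeta^2\,dx
\;\le\;
\frac{1}{2}\Bigl(\tfrac{1}{\lambda}+\tfrac{c^2}{\gamma}\Bigr)\int_{\RR^n}\!\int_{\RR^n}\![\zeta(x)-\zeta(z)]^2J(x-z)\,dz\,dx,
\end{equation*}
and the sharpest constant is obtained by equating the two terms on the right, i.e. $c=\sqrt{\gamma/\lambda}$. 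With that choice $\tfrac{1}{\lambda}+\tfrac{c^2}{\gamma}=\tfrac{2}{\lambda}$ and the left-hand coefficient is $2\sqrt{\gamma/\lambda}$; multiplying through by $\lambda/2$ yields
\begin{equation*}
\sqrt{\lambda\gamma}\int_{\Omega}\sqrt{F_vG_u}\,\zeta^2\,dx
\;\le\;
\frac{1}{2}\int_{\RR^n}\!\int_{\RR^n}\![\zeta(x)-\zeta(z)]^2J(x-z)\,dz\,dx.
\end{equation*}
Substituting the expressions for $\sqrt{F_vG_u}$ computed above gives exactly (3.2), (3.3) and (3.4) respectively.

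The last step is to pass from the minimal solutions $(u_\lambda,v_{\sigma\lambda})$ along the ray $\Gamma_\sigma$ to the extremal solution $(u,v)$. Since the inequalities above hold for every $\tfrac{\lambda^*}{2}<\lambda<\lambda^*$ with the minimal solution in place of $(u,v)$, and since $u_\lambda\nearrow u$, $v_{\sigma\lambda}\nearrow v$ pointwise as $\lambda\nearrow\lambda^*$, monotone convergence on the left-hand side (the integrands are positive and monotone in $\lambda$) together with the fact that the right-hand side is independent of the solution delivers the desired inequality for the extremal pair. There is no real obstacle here: the balancing trick $\eta=\sqrt{\gamma/\lambda}\,\zeta$ is the only genuinely new ingredient, and once it is identified the rest is bookkeeping plus a standard monotone passage to the limit.
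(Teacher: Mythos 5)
Your proposal is correct and follows the route the paper implicitly intends: specialize Proposition 3.1 to the three systems (where $F_u=G_v=0$), take $\eta=\sqrt{\gamma/\lambda}\,\zeta$ to balance the two terms on the right and cancel the mixed weights, and then pass from the minimal solutions along $\Gamma_\sigma$ to the extremal solution by monotone convergence. The computations of $\sqrt{F_vG_u}$, the optimization of the constant $c$, and the monotonicity of the left-hand integrands in $\lambda$ all check out.
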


\begin{cor}\label{stablein1}  
  Let $(u,v)$ be  the extremal solution of system  $(H)_{\lambda,\gamma}$ when $f'g'\ge 0$.  Then,  
\begin{eqnarray}\label{stabilityH}
&& \int_{\Omega}  f''g \zeta^2 +fg'' \eta^2 + 2f'g' \zeta\eta dx
\\& \le&\nonumber  \frac{1}{2} \int_{\RR^n} \int_{\RR^n}\left(  \frac{1}{\lambda} |\zeta(x)- \zeta(z)|^2 + \frac{1}{\gamma}  |\eta(x)- \eta(z)|^2 \right)  J(x-z)  dz dx , 
\end{eqnarray} 
for test functions $\zeta,\eta$ so that $\zeta,\eta=0$ in $\RO$. 
\end{cor}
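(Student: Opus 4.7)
The plan is to apply Proposition 3.1 directly to the gradient system $(H)_{\lambda,\gamma}$ with the specific choice $F(u,v) = f'(u)g(v)$ and $G(u,v) = f(u)g'(v)$, and then pass to the limit $\lambda \nearrow \lambda^*$ along the ray $\Gamma_\sigma$ to transfer the inequality from minimal solutions to the extremal solution.

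First I would compute the four partial derivatives appearing in Proposition 3.1: $F_u = f''(u)g(v)$, $G_v = f(u)g''(v)$, and $F_v = f'(u)g'(v) = G_u$. The hypothesis $F_vG_u \ge 0$ of Proposition 3.1 is then automatic, since $F_vG_u = \bigl(f'(u)g'(v)\bigr)^2 \ge 0$. Moreover, the assumption $f'g' \ge 0$ is exactly what is needed to identify $\sqrt{F_vG_u} = f'(u)g'(v)$ (without absolute values), so that the mixed term in (3.1) reads $2 f'(u)g'(v)\zeta\eta$. Substituting these expressions into Proposition 3.1 for the minimal solution pair $(u_\lambda,v_\lambda)$ of $(H)_{\lambda,\sigma\lambda}$ yields
\begin{equation*}
\int_{\Omega} f''(u_\lambda)g(v_\lambda)\zeta^2 + f(u_\lambda)g''(v_\lambda)\eta^2 + 2 f'(u_\lambda)g'(v_\lambda)\zeta\eta \, dx \le \frac{1}{2}\int_{\RR^n}\int_{\RR^n}\Bigl(\tfrac{1}{\lambda}[\zeta(x)-\zeta(z)]^2 + \tfrac{1}{\gamma}[\eta(x)-\eta(z)]^2\Bigr) J(x-z)\,dz\,dx,
\end{equation*}
for any admissible test pair $(\zeta,\eta)$ vanishing on $\RO$.

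Finally, I would pass to the limit $\lambda \nearrow \lambda^*$. Since $u_\lambda \nearrow u$ and $v_\lambda \nearrow v$ pointwise and $f,g$ together with their derivatives are smooth and monotone, the integrand on the left is nonnegative (after fixing compactly supported $\zeta,\eta$ and using that $f'',g''\ge 0$ by convexity in the assumptions of the paper), so the monotone convergence theorem applies to each of the three terms separately. The right-hand side is independent of the solution and only involves $\lambda,\gamma=\sigma\lambda$, which converge to $\lambda^*,\gamma^*$. This gives the stated inequality for the extremal pair $(u,v)$.

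I do not expect any significant obstacle: the statement is essentially the specialization of Proposition 3.1 to the gradient structure $F = f'g$, $G = fg'$, and the sign condition $f'g'\ge 0$ is precisely what converts the abstract $\sqrt{F_vG_u}$ term into a clean product. The only mildly delicate point is justifying the passage to the limit along $\Gamma_\sigma$, which is handled by monotone convergence on the left and direct substitution on the right.
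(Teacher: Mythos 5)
Your main idea is exactly the intended derivation: specialize the general stability inequality of Proposition~3.1 to $F(u,v) = f'(u)g(v)$ and $G(u,v) = f(u)g'(v)$, compute $F_u = f''g$, $G_v = fg''$, $F_v = G_u = f'g'$, observe that $F_vG_u = (f'g')^2 \ge 0$ holds automatically, and use $f'g' \ge 0$ to identify $\sqrt{F_vG_u} = f'g'$. The substitution is correct and yields (\ref{stabilityH}) for every minimal solution along $\Gamma_\sigma$.

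Your justification of the passage to the extremal solution, however, contains an error. The assertion that ``the integrand on the left is nonnegative'' is false: the form $f''g\,\zeta^2 + fg''\,\eta^2 + 2f'g'\,\zeta\eta$ is pointwise nonnegative only when $(ff'')(gg'') \ge (f'g')^2$, which fails for instance for power nonlinearities, where $ff''/(f')^2 = (p-1)/p < 1$. Consequently monotone convergence applied ``to each of the three terms separately'' does not go through for the mixed term, since $f'(u_\lambda)g'(v_\lambda)\,\zeta\eta$ changes sign with $\zeta\eta$ and its positive and negative parts can each tend to $+\infty$. This is not a fatal flaw: in the paper the inequality is actually invoked only for minimal solutions with the nonnegative choices $\zeta = f'(u)-a$ and $\eta = g'(v)-b$ in Lemma~\ref{lemab}, and the limit $\lambda \nearrow \lambda^*$ is taken afterward in the resulting a priori bound rather than in the stability inequality itself. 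The cleanest way to repair your argument is to state the corollary for minimal solutions and defer the limit to the point where it is actually needed.
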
  

\section{Integral estimates for stable solutions}\label{secint}

In this section, we establish some technical integral estimates for stable solutions of systems. Most of the ideas and methods applied in this section are inspired by the ones developed in the literature, see for example \cite{dfs, far1, far2, fg, fw}.  We start with the Gelfand system. 

 \begin{lemma}\label{lemuvg} Suppose that $(u,v)$ is a solution of $ (G)_{\lambda,\gamma}$ when  
 the associated stability inequality (\ref{stabilityG}) holds.  Then, there exists a positive constant $C_{\lambda,\gamma,|\Omega|}=C({\lambda,\gamma,|\Omega|})$ such that 
 \begin{equation*}
 \int_{\Omega} e^{u+v} dx \le C_{\lambda,\gamma,|\Omega|} . 
  \end{equation*}
 \end{lemma}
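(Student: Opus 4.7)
The plan is to combine the stability inequality (\ref{stabilityG}) with testing of the Gelfand equations against exponential functions. The key pointwise ingredient I would use is the elementary convexity estimate
\[
(a-b)(e^a - e^b) \ \ge\ 4\,(e^{a/2}-e^{b/2})^2 \qquad \text{for all } a,b\in\mathbb R,
\]
which follows from the logarithmic-mean inequality $\log t \ge 2(t-1)/(t+1)$ for $t>0$ after the substitution $t = e^{(a-b)/2}$.

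First I would test $\mathcal L u = \lambda e^v$ against $(e^u - 1)$, which vanishes on $\RO$ since $u\equiv 0$ there, and use the symmetric integration-by-parts identity of Lemma~\ref{fgprop} to obtain
\[
\tfrac12\int\!\!\int \bigl(u(x)-u(z)\bigr)\bigl(e^{u(x)}-e^{u(z)}\bigr) J(x-z)\,dx\,dz \ =\ \lambda \int_\Omega e^v(e^u - 1)\,dx.
\]
Combined with the pointwise convexity inequality above, this yields
\[
\int\!\!\int \bigl(e^{u(x)/2} - e^{u(z)/2}\bigr)^2 J(x-z)\,dx\,dz \ \le\ \tfrac{\lambda}{2}\int_\Omega e^{u+v}\,dx,
\]
and symmetrically with $u,v$ and $\lambda,\gamma$ swapped.

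Next I would plug $\zeta = e^{u/2} - 1$ and, separately, $\zeta = e^{v/2} - 1$ into (\ref{stabilityG}); summing the two resulting inequalities and inserting the bounds from the previous step produces
\[
\sqrt{\lambda\gamma}\int_\Omega e^{(u+v)/2}\bigl[(e^{u/2}-1)^2 + (e^{v/2}-1)^2\bigr]\,dx \ \le\ \tfrac{\lambda+\gamma}{4}\int_\Omega e^{u+v}\,dx.
\]
Expanding $e^{(u+v)/2}(e^{u/2} - 1)^2 = e^{3u/2 + v/2} - 2 e^{u + v/2} + e^{(u+v)/2}$, using the AM--GM bound $e^{3u/2+v/2} + e^{3v/2+u/2} \ge 2 e^{u+v}$, and applying a Young-type inequality $e^{u + v/2} \le \varepsilon\, e^{u+v} + C_\varepsilon\, e^u$ (which follows from $e^{v/2}\le \varepsilon e^v + 1/(4\varepsilon)$) to absorb the mixed cross terms, I would extract an inequality of the shape
\[
\int_\Omega e^{u+v}\,dx \ \le\ K_1(\lambda,\gamma)\Bigl(\int_\Omega e^u\,dx + \int_\Omega e^v\,dx\Bigr) + K_2(\lambda,\gamma,|\Omega|).
\]
The remaining $L^1$ bounds on $e^u$ and $e^v$, depending only on $(\lambda,\gamma,|\Omega|)$, would be closed by pairing each equation with the torsion function $\tau$ for $\mathcal L$ on $\Omega$ (so that $\int u = \lambda\int \tau e^v$) and bootstrapping via Proposition~\ref{propregL} starting from a low integrability class.

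The main obstacle will be the numeric balance at the absorption stage: since $v\ge 0$ forces $e^{u+v}\ge e^u$, any bound of the form $\int e^{u+v}\le K\bigl(\int e^u + \int e^v\bigr)$ is automatically consistent with $K\ge 1/2$, and so the stability-plus-equation argument above is not by itself enough unless the constants are tracked tightly. The Young parameter $\varepsilon$ has to be tuned carefully against the stability constant $\sqrt{\lambda\gamma}$ and the equation-side constant $(\lambda+\gamma)/4$ so that, after all cancellations, a strictly positive coefficient survives on $\int e^{u+v}$ and the estimate is non-vacuous.
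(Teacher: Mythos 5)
Your opening steps coincide with the paper's: testing $\mathcal L u=\lambda e^{v}$ against $e^{u}-1$, symmetrizing via Lemma \ref{fgprop}, using the convexity inequality $(a-b)(e^{a}-e^{b})\ge 4(e^{a/2}-e^{b/2})^{2}$, and then plugging $\zeta=e^{u/2}-1$ into (\ref{stabilityG}) to arrive at
\[
\sqrt{\lambda\gamma}\int_{\Omega} e^{\frac{u+v}{2}}\bigl(e^{u/2}-1\bigr)^{2}\,dx\ \le\ \frac{\lambda}{4}\int_{\Omega} e^{u+v}\,dx ,
\]
exactly as in the paper. The genuine gap is in how you combine this with its $u\leftrightarrow v$ counterpart. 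You \emph{add} the two inequalities, which puts $\frac{\lambda+\gamma}{4}\int e^{u+v}$ on the right while the left, after expansion, AM--GM and Young, produces at best $\bigl(2\sqrt{\lambda\gamma}(1-2\varepsilon)\bigr)\int e^{u+v}$ minus lower-order terms. For the absorption you therefore need $8\sqrt{\lambda\gamma}>\lambda+\gamma$, and no choice of $\varepsilon$ can rescue this when the ratio $\lambda/\gamma$ is far from $1$ (e.g.\ $\lambda/\gamma\ge(4+\sqrt{15})^{2}$): the coefficient of $\int e^{u+v}$ is then negative and your inequality is vacuous. Since the lemma must hold for arbitrary $(\lambda,\gamma)$ (and in the application $\gamma=\sigma\lambda$ with $\sigma$ arbitrary), this is a structural failure of the additive combination, not a matter of tracking constants more tightly, and your closing remark flags the difficulty without resolving it.

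The paper's proof avoids this precisely by keeping the two estimates separate and combining them \emph{multiplicatively}. Each one is first rearranged (via $e^{u/2}\le\frac14 e^{u}+1$) into the form
\[
2\sqrt{\lambda\gamma}\int_{\Omega} e^{\frac{u+v}{2}}e^{u}\,dx\ \le\ \lambda\int_{\Omega}e^{u+v}\,dx+ C\sqrt{\lambda\gamma}\int_{\Omega}e^{\frac{u+v}{2}}\,dx ,
\]
and its analogue with $e^{v}$ and $\gamma$; multiplying the two and using the Cauchy--Schwarz inequality $\int e^{\frac{u+v}{2}}e^{u}\int e^{\frac{u+v}{2}}e^{v}\ge\bigl(\int e^{u+v}\bigr)^{2}$ pits $4\lambda\gamma\bigl(\int e^{u+v}\bigr)^{2}$ against a main term $\lambda\gamma\bigl(\int e^{u+v}\bigr)^{2}$, so a strictly positive coefficient survives for every $\lambda,\gamma>0$; the remaining terms involve only $\int e^{\frac{u+v}{2}}\le|\Omega|^{1/2}\bigl(\int e^{u+v}\bigr)^{1/2}$, which closes the estimate. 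This also removes your secondary loose end: your Young step $e^{v/2}\le\varepsilon e^{v}+C_\varepsilon$ leaves you needing uniform $L^{1}$ bounds on $e^{u}$ and $e^{v}$, which you only sketch via the torsion function, whereas the paper's route needs no such separate a priori bounds.
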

\begin{proof}  Multiply the second equation of $ (G)_{\lambda,\gamma}$ with $e^{u}-1$  and  integrate to get   
 \begin{equation*}
 \lambda \int_{\Omega} (e^{u}-1) e^v dx= \int_{\Omega} \mathcal L u (e^{u}-1) dx. 
 \end{equation*} 
 From Lemma \ref{fgprop}, we get
 \begin{equation*}
\int_{\Omega} \mathcal L u (e^{u}-1) dx =  \frac{1}{2} \int_{\mathbb R^n} \int_{\mathbb R^n}    \left[u(x)-u(z)  \right] \left[e^{u(x)}-e^{u(z)} \right] J(x-z) dx dz . 
 \end{equation*} 
 Note that for $\alpha,\beta\in\mathbb R$, one can see that 
 \begin{equation*}
 \left| e^{\frac{\beta}{2}} - e^{\frac{\alpha}{2}} \right|^2 \le \frac{1}{4} (e^\beta-e^\alpha)(\beta-\alpha). 
 \end{equation*}
 Applying the above inequality for $\alpha=u(z)$ and $\beta=u(x)$, we obtain 
  \begin{equation*}
 \left| e^{\frac{u(x)}{2}} - e^{\frac{u(z)}{2}} \right|^2 \le \frac{1}{4} (e^{u(x)}-e^{u(z)})(u(x)-u(z)). 
 \end{equation*}
From the above, we conclude 
 \begin{equation*}
 \lambda \int_{\Omega} e^{u+v} dx \ge  \lambda \int_{\Omega} (e^{u}-1) e^v dx \ge  2 \int_{\mathbb R^n} \int_{\mathbb R^n} 
  \left| e^{\frac{u(x)}{2}} - e^{\frac{u(z)}{2}} \right|^2J(x-z) dx dz . 
 \end{equation*}  
Test the stability inequality, Corollary \ref{stablein},  on  $\zeta=e^{\frac{u}{2}}-1$ to get 
  \begin{equation*}\label{}
\sqrt{\lambda\gamma} \int_{\Omega}  e^{\frac{u+v}{2}} (e^{\frac{u}{2}}-1)^2 dx \le  \frac{1}{2}\int_{\RR^n} \int_{\RR^n} \left|e^{\frac{u(x)}{2}}- e^{\frac{u(z)}{2}} \right|^2 J(x-z) dz dx . 
\end{equation*}  
 Combining above inequalities, we conclude 
 \begin{equation}\label{rruv}
\sqrt{\lambda\gamma} \int_{\Omega}  e^{\frac{u+v}{2}} (e^{\frac{u}{2}}-1)^2 dx \le  \frac{1}{4}  \lambda \int_{\Omega} e^{u+v} dx . 
\end{equation}  
 Applying the Young's inequality $e^{u/2}\le \frac{e^u}{4}+1$, we conclude 
 \begin{equation*}\label{}
\int_{\Omega}  e^{\frac{u+v}{2}} e^{\frac{u}{2}} dx \le  \frac{1}{4}   \int_{\Omega} e^{\frac{u+v}{2}}  e^u dx +  \int_{\Omega} e^{\frac{u+v}{2}}  dx . 
\end{equation*}  
From this and  expanding the left-hand side of (\ref{rruv}), we obtain 
 \begin{eqnarray*}
  \lambda \int_{\Omega} e^{u+v} dx + 8 \sqrt{\lambda\gamma} \int_{\Omega}  e^{\frac{u+v}{2}}  dx &\ge& 2 \sqrt{\lambda\gamma}  \int_{\Omega}  e^{\frac{u+v}{2}} e^{u} dx , 
  \\
   \gamma \int_{\Omega} e^{u+v} dx + 8 \sqrt{\lambda\gamma} \int_{\Omega}  e^{\frac{u+v}{2}}  dx  & \ge& 2 \sqrt{\lambda\gamma}  \int_{\Omega}  e^{\frac{u+v}{2}} e^{v} dx  . 
  \end{eqnarray*}
Multiplying these inequalities and applying the Cauchy-Schwarz inequality, i.e.  
\begin{equation*}\label{}
\int_{\Omega}  e^{\frac{u+v}{2}} e^{{u}} dx \int_{\Omega}  e^{\frac{u+v}{2}} e^{{v}} dx  \ge  
\left(\int_{\Omega}   e^{u+v} dx\right)^2, 
\end{equation*}  
we complete the proof. 

 \end{proof} 

We now provide a counterpart of the above estimate for stable solutions of  $ (E)_{\lambda,\gamma}$.   

 \begin{lemma}\label{lemuve} Suppose that $(u,v)$ is a solution of  $ (E)_{\lambda,\gamma}$ when  
 the associated stability inequality (\ref{stabilityE}) holds.  Then, there exists a positive constant $C_{\lambda,\gamma,|\Omega|}=C({\lambda,\gamma,|\Omega|})$ such that 
 \begin{equation}\label{inuvp}
 \int_{\Omega} (1+u)^p(1+v)^p dx \le C_{\lambda,\gamma,|\Omega|}  . 
 \end{equation}
 \end{lemma}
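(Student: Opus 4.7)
The plan is to imitate the proof of Lemma \ref{lemuvg} with the exponential test function replaced by a power-type analogue tailored to the Lane--Emden nonlinearity. First I would multiply $\mathcal L u = \lambda(1+v)^p$ by $(1+u)^p-1$, integrate, and apply Lemma \ref{fgprop} to rewrite the result as a symmetric double integral of $[u(x)-u(z)][(1+u(x))^p-(1+u(z))^p]$ against the kernel $J$. The crucial elementary inequality
\begin{equation*}
\bigl(a^{(p+1)/2}-b^{(p+1)/2}\bigr)^2 \le \frac{(p+1)^2}{4p}(a^p-b^p)(a-b), \qquad a,b>0,
\end{equation*}
which follows immediately from the Cauchy--Schwarz inequality applied to the identity $a^{(p+1)/2}-b^{(p+1)/2}=\tfrac{p+1}{2}\int_b^a t^{(p-1)/2}dt$, is the analogue of $|e^{\alpha/2}-e^{\beta/2}|^2\le \tfrac14(e^\alpha-e^\beta)(\alpha-\beta)$ used in the Gelfand case, and it will control the Gagliardo seminorm of $(1+u)^{(p+1)/2}-1$ in terms of $\lambda\int[(1+u)^p-1](1+v)^p\,dx$.

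Next I would test the stability inequality (\ref{stabilityE}) with $\zeta=(1+u)^{(p+1)/2}-1$, which vanishes in $\RO$ since $u=0$ there. Writing $A=1+u$, $B=1+v$, combining with the previous step and expanding the square $(A^{(p+1)/2}-1)^2=A^{p+1}-2A^{(p+1)/2}+1$ produce
\begin{equation*}
p\sqrt{\lambda\gamma}\int_\Omega A^{(p-1)/2}B^{(p-1)/2}\bigl(A^{p+1}-2A^{(p+1)/2}+1\bigr)dx \le \frac{(p+1)^2\lambda}{4p}\int_\Omega A^pB^p\,dx.
\end{equation*}
Applying Young's inequality $A^p\le \epsilon A^{(3p+1)/2}+C_\epsilon$ to absorb the cross term into the leading one, and dropping the remaining nonnegative contributions, I obtain
\begin{equation*}
p(1-2\epsilon)\sqrt{\lambda\gamma}\int_\Omega A^{(3p+1)/2}B^{(p-1)/2}dx \le \frac{(p+1)^2\lambda}{4p}\int_\Omega A^pB^p\,dx + C_\epsilon\sqrt{\lambda\gamma}\int_\Omega B^{(p-1)/2}dx,
\end{equation*}
together with its mirror image after interchanging $u\leftrightarrow v$.

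The final step multiplies the two mirror inequalities and uses the Cauchy--Schwarz bound $\bigl(\int A^pB^p\bigr)^2 \le \int A^{(3p+1)/2}B^{(p-1)/2}\cdot \int A^{(p-1)/2}B^{(3p+1)/2}$ to extract $X:=\int A^pB^p$ on the left. The lower-order integrals $\int A^{(p-1)/2}$ and $\int B^{(p-1)/2}$ are controlled by $C|\Omega|^{(p+1)/(2p)}X^{(p-1)/(2p)}$ via H\"older together with the pointwise bounds $A,B\ge 1$. The outcome is a polynomial inequality of the shape
\begin{equation*}
\Bigl[p^2(1-2\epsilon)^2-\tfrac{(p+1)^4}{16p^2}\Bigr]\lambda\gamma\, X^2 \le C_1 X^{(3p-1)/(2p)}+C_2 X^{(p-1)/p}.
\end{equation*}
The decisive step, and the only genuine obstacle I foresee, is that the bracketed coefficient on the left be positive for $\epsilon$ small; this reduces to $16p^4>(p+1)^4$, equivalently $2p>p+1$, which is exactly the standing assumption $p>1$. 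Since the exponents $(3p-1)/(2p)$ and $(p-1)/p$ are both strictly less than $2$, this inequality forces $X\le C(\lambda,\gamma,|\Omega|)$, establishing (\ref{inuvp}).
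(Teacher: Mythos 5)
Your proposal follows essentially the same route as the paper: multiply the equation for $u$ by $(1+u)^p-1$, rewrite via Lemma \ref{fgprop}, apply the elementary inequality $(a^p-b^p)(a-b)\ge\frac{4p}{(p+1)^2}\bigl(a^{(p+1)/2}-b^{(p+1)/2}\bigr)^2$, test the stability inequality with $\zeta=(1+u)^{(p+1)/2}-1$, absorb the cross term by Young, take the mirror inequality in $v$, multiply the two and use Cauchy--Schwarz plus H\"older, and finally invoke the coefficient positivity $16p^4>(p+1)^4\Leftrightarrow p>1$. The small bookkeeping differences (e.g.\ $(1-2\epsilon)$ versus $(1-\epsilon)$ and the exact form of the lower-order remainder) are cosmetic and do not change the argument.
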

 
 \begin{proof}
 Multiply the second equation of $ (E)_{\lambda,\gamma}$ with $(1+u)^p-1$  and  integrate to get   
 \begin{equation*}
 \lambda \int_{\Omega} [(1+u)^p-1] (1+v)^p dx= \int_{\Omega} \mathcal L u [(1+u)^p-1] dx. 
 \end{equation*} 
 From Lemma \ref{fgprop}, we get
 \begin{equation*}
\int_{\Omega} \mathcal L u [(1+u)^p-1] dx =  \frac{1}{2} \int_{\mathbb R^n} \int_{\mathbb R^n}    \left[u(x)-u(z)  \right] \left[(1+u(x))^p-(1+u(z))^p \right] J(x-z) dx dz . 
 \end{equation*} 
 Note that for $\alpha,\beta\in\mathbb R$, one can see that 
 \begin{equation*}
  [(1+\alpha)^p-(1+\beta)^p](\alpha-\beta) \ge \frac{4p}{(p+1)^2}  \left| (1+\alpha)^{\frac{p+1}{2}} - (1+\beta)^{\frac{p+1}{2}}  \right|^2 .  
 \end{equation*}
 Applying the above inequality for $\alpha=u(x)$ and $\beta=u(z)$, we obtain 
  \begin{equation*}
    [(1+u(x))^p-(1+u(z))^p][u(x)-u(z)] \ge \frac{4p}{(p+1)^2}  \left| (1+u(x))^{\frac{p+1}{2}} - (1+u(z))^{\frac{p+1}{2}}  \right|^2 .  
 \end{equation*}
From the above, we conclude 
 \begin{equation*}
 \lambda \int_{\Omega}  (1+u)^p (1+v)^p dx \ge \frac{4p}{(p+1)^2}  \frac{1}{2}  \int_{\mathbb R^n} \int_{\mathbb R^n} 
   \left| (1+u(x))^{\frac{p+1}{2}} - (1+u(z))^{\frac{p+1}{2}}  \right|^2 J(x-z) dx dz . 
 \end{equation*}  
Test the stability inequality, Corollary \ref{stablein},  on  $\zeta=(1+u)^{\frac{p+1}{2}}-1$ to get 
  \begin{eqnarray*}\label{}
&&\sqrt{\lambda\gamma} p \int_{\Omega} (1+u)^{\frac{p-1}{2}}  (1+v)^{\frac{p-1}{2}} [(1+u)^{\frac{p+1}{2}}-1]^2 dx 
\\&\le&  \frac{1}{2}\int_{\RR^n} \int_{\RR^n} \left| (1+u(x))^{\frac{p+1}{2}}- (1+u(z))^{\frac{p+1}{2}} \right|^2 J(x-z) dz dx . 
\end{eqnarray*}  
 Combining above inequalities, we conclude 
 \begin{equation}\label{rruv}
\sqrt{\lambda\gamma} p \int_{\Omega} (1+u)^{\frac{p-1}{2}}  (1+v)^{\frac{p-1}{2}} [(1+u)^{\frac{p+1}{2}}-1]^2 dx 
\le \frac{(p+1)^2}{4p}  \lambda \int_{\Omega}  (1+u)^p (1+v)^p dx . 
\end{equation}  
Expanding the left-hand side of the inequality and rearranging we get 
\begin{eqnarray}\label{l1}
&&\sqrt{\lambda\gamma} p(1-\epsilon) \int_{\Omega} (1+u)^{\frac{p-1}{2}}  (1+v)^{\frac{p-1}{2}} (1+u)^{p+1} dx 
\\&&\nonumber \le \frac{(p+1)^2}{4p}  \lambda \int_{\Omega}  (1+u)^p (1+v)^p dx 
 +  \frac{ \sqrt{\lambda\gamma}  p}{\epsilon}   \int_{\Omega}  (1+u)^{\frac{p-1}{2}}  (1+v)^{\frac{p-1}{2}}  dx, 
\end{eqnarray}  
where we have used the inequality $a\le \frac{\epsilon}{2}a^2+\frac{1}{2\epsilon}$ for any $\epsilon>0$. Similarly, 
\begin{eqnarray}\label{l2}
&&\sqrt{\lambda\gamma} p(1-\epsilon) \int_{\Omega} (1+v)^{\frac{p-1}{2}}  (1+u)^{\frac{p-1}{2}} (1+v)^{p+1} dx 
\\&&\nonumber \le \frac{(p+1)^2}{4p}  \gamma \int_{\Omega}  (1+u)^p (1+v)^p dx 
 +  \frac{ \sqrt{\lambda\gamma}  p}{\epsilon}   \int_{\Omega}  (1+u)^{\frac{p-1}{2}}  (1+v)^{\frac{p-1}{2}}  dx . 
\end{eqnarray}  
Note that from the Cauchy-Schwarz inequality we get 
\begin{equation*}\label{}
\int_{\Omega} (1+u)^{\frac{p-1}{2}}  (1+v)^{\frac{p-1}{2}} (1+u)^{p+1} dx   \int_{\Omega} (1+v)^{\frac{p-1}{2}}  (1+u)^{\frac{p-1}{2}} (1+v)^{p+1} dx \ge \left( \int_{\Omega}  (1+u)^p (1+v)^p dx  \right)^2 . 
\end{equation*}  
From this and 
multiplying both sides of the above (\ref{l1}) and (\ref{l2}) we conclude 
\begin{equation*}\label{}
\lambda\gamma \left[ p^2(1-\epsilon)^2- \left(\frac{(p+1)^2}{4p}\right)^2\right]\left( \int_{\Omega}  (1+u)^p (1+v)^p dx  \right)^2 \le C_{\epsilon,\lambda,\gamma} \left[   \int_{\Omega}  (1+u)^{\frac{p-1}{2}}  (1+v)^{\frac{p-1}{2}}  dx\right]^2 ,
\end{equation*}  
for small $\epsilon>0$. Note that $p^2- \left(\frac{(p+1)^2}{4p}\right)^2>0$ when $p>1$. Therefore, taking small enough  $\epsilon>0$ and applying the H\"{o}lder's inequality,  we complete the proof.

 \end{proof} 

Here is a counterpart of the above estimate for stable solutions of  $ (M)_{\lambda,\gamma}$.

 \begin{lemma}\label{lemuvm} Suppose that $(u,v)$ is a solution of  $ (M)_{\lambda,\gamma}$ when  
 the associated stability inequality (\ref{stabilityM}) holds.  Then, there exists a positive constant $C_{\lambda,\gamma,|\Omega|}=C({\lambda,\gamma,|\Omega|})$ such that 
 \begin{equation}
 \int_{\Omega} (1-u)^{-p}(1-v)^{-p} dx \le C_{\lambda,\gamma,|\Omega|}. 
 \end{equation}
 \end{lemma}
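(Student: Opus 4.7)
My plan is to follow the proof of Lemma~\ref{lemuve} (the Lane--Emden case) essentially verbatim, with the convex increasing function $h(t)=(1+t)^p$ replaced by $h(t)=(1-t)^{-p}$ on $(0,1)$. First I multiply the second equation of $(M)_{\lambda,\gamma}$ by the nonnegative test function $(1-v)^{-p}-1$ and apply Lemma~\ref{fgprop} to symmetrize, obtaining
$$\gamma\int_\Omega (1-u)^{-p}\bigl[(1-v)^{-p}-1\bigr]\,dx = \tfrac12\!\int_{\RR^n}\!\int_{\RR^n}\!\bigl[v(x)-v(z)\bigr]\bigl[(1-v(x))^{-p}-(1-v(z))^{-p}\bigr]J(x-z)\,dx\,dz.$$
Then I use the pointwise convexity inequality
$$(\alpha-\beta)\bigl[(1-\alpha)^{-p}-(1-\beta)^{-p}\bigr] \ge \frac{4p}{(p-1)^2}\bigl|(1-\alpha)^{-(p-1)/2}-(1-\beta)^{-(p-1)/2}\bigr|^2,\qquad \alpha,\beta\in(0,1),$$
which follows from Cauchy--Schwarz applied to $\int_\beta^\alpha\sqrt{h'(s)}\,ds$, since $H(s)=\tfrac{2}{p-1}(1-s)^{-(p-1)/2}$ satisfies $pH'(s)^2=h'(s)$ (this is where $p>1$ enters).

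Next I test the stability inequality (\ref{stabilityM}) with the matched choice $\zeta=(1-v)^{-(p-1)/2}-1$, which vanishes in $\RO$. Combining with the identity and the convexity inequality above yields
$$p\sqrt{\lambda\gamma}\int_\Omega (1-u)^{-(p+1)/2}(1-v)^{-(p+1)/2}\bigl[(1-v)^{-(p-1)/2}-1\bigr]^2\,dx \le \frac{(p-1)^2}{4p}\gamma\int_\Omega (1-u)^{-p}(1-v)^{-p}\,dx.$$
Expanding the squared bracket as $(1-v)^{-(p-1)}-2(1-v)^{-(p-1)/2}+1$ and absorbing the cross term via Young's inequality $a\le\tfrac{\varepsilon}{2}a^2+\tfrac{1}{2\varepsilon}$ applied with $a=(1-v)^{-(p-1)/2}$ produces, with
$$X:=\int_\Omega (1-u)^{-p}(1-v)^{-p}\,dx, \qquad Y:=\int_\Omega (1-u)^{-(p+1)/2}(1-v)^{-(p+1)/2}\,dx,$$
the estimate
$$p(1-\varepsilon)\sqrt{\lambda\gamma}\!\int_\Omega (1-u)^{-(p+1)/2}(1-v)^{-(3p-1)/2}\,dx \le \frac{(p-1)^2\gamma}{4p}\,X + \frac{p\sqrt{\lambda\gamma}}{\varepsilon}\,Y.$$
The symmetric argument (multiplying the first equation of $(M)_{\lambda,\gamma}$ by $(1-u)^{-p}-1$ and testing stability with $\zeta=(1-u)^{-(p-1)/2}-1$) yields the analogous inequality with the roles of $u,v$ and $\lambda,\gamma$ swapped.

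Finally I multiply the two inequalities and apply the Cauchy--Schwarz inequality
$$X^2 \le \int_\Omega (1-u)^{-(p+1)/2}(1-v)^{-(3p-1)/2}\,dx\cdot\int_\Omega (1-v)^{-(p+1)/2}(1-u)^{-(3p-1)/2}\,dx,$$
reducing everything to a polynomial inequality in $X$ whose leading coefficient is
$$\lambda\gamma\Bigl[\,p^2(1-\varepsilon)^2-\Bigl(\tfrac{(p-1)^2}{4p}\Bigr)^2\,\Bigr].$$
This coefficient is strictly positive for small $\varepsilon$, since $p^2>\bigl(\tfrac{(p-1)^2}{4p}\bigr)^2$ reduces to $2p>p-1$, which holds for every $p>1$. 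H\"older's inequality with conjugate exponent $2p/(p+1)$ gives $Y\le |\Omega|^{(p-1)/(2p)} X^{(p+1)/(2p)}$, so both $Y^2$ and $XY$ are sub-quadratic in $X$ (the exponents are $(p+1)/p$ and $(3p+1)/(2p)$, both strictly less than $2$ when $p>1$) and can be absorbed on the left. The resulting bound $X\le C_{\lambda,\gamma,|\Omega|}$ is exactly the desired estimate. The only technical subtlety is bookkeeping of the exponents to check the algebraic threshold; whereas in Lane--Emden this threshold is precisely $p>1$, in the MEMS case it is automatically satisfied because $(p-1)^2$ replaces $(p+1)^2$ in the critical constant.
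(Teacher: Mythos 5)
Your proof is correct and follows essentially the same route as the paper: multiply one equation of $(M)_{\lambda,\gamma}$ by $(1-\cdot)^{-p}-1$, symmetrize with Lemma~\ref{fgprop}, apply the pointwise convexity inequality tying $(1-t)^{-p}$ to $(1-t)^{-(p-1)/2}$, test the stability inequality (\ref{stabilityM}) with the matched $\zeta$, absorb the cross term via Young, form the symmetric inequality, multiply the two, apply Cauchy--Schwarz and then H\"older. The only cosmetic difference is that you work with the second equation and the variable $v$ where the paper's proof (modulo a couple of sign typos in the displayed formulas) works with the first equation and $u$; since the argument is symmetric in the pair, this is immaterial, and your explicit exponent bookkeeping at the absorption step is a welcome bit of extra care.
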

 
  \begin{proof} The proof is similar to the one provide in Lemma \ref{lemuve}.  Multiply the second equation of $ (M)_{\lambda,\gamma}$ with $(1-u)^{-p}-1$  and  integrate to get   
 \begin{equation*}
 \lambda \int_{\Omega} [(1-u)^{-p}-1] (1-v)^{-p} dx= \int_{\Omega} \mathcal L u [(1-u)^{-p}-1] dx. 
 \end{equation*} 
 From Lemma \ref{fgprop}, we get
 \begin{equation*}
\int_{\Omega} \mathcal L u [(1-u)^{-p}-1] dx =  \frac{1}{2} \int_{\mathbb R^n} \int_{\mathbb R^n}    \left[u(x)-u(z)  \right] \left[(1-u(x))^{-p}-(1-u(z))^{-p} \right] J(x-z) dx dz . 
 \end{equation*} 
 Note that for $\alpha,\beta\in\mathbb R$, one can see that 
 \begin{equation*}
  [(1-\alpha)^{-p}-(1-\beta)^{-p}](\alpha-\beta) \ge \frac{4p}{(p-1)^2}  \left| (1-\alpha)^{\frac{1-p}{2}} - (1-\beta)^{\frac{1-p}{2}}  \right|^2 .  
 \end{equation*}
 Applying the above inequality for $\alpha=u(x)$ and $\beta=u(z)$, we obtain 
  \begin{equation*}
    [(1-u(x))^{-p}-(1-u(z))^{-p}][u(x)-u(z)] \ge \frac{4p}{(p-1)^2}  \left| (1-u(x))^{\frac{-p+1}{2}} - (1-u(z))^{\frac{-p+1}{2}}  \right|^2 .  
 \end{equation*}
From the above, we conclude 
 \begin{equation*}
 \lambda \int_{\Omega}  (1-u)^{-p} (1-v)^{-p} dx \ge \frac{4p}{(p-1)^2}  \frac{1}{2}  \int_{\mathbb R^n} \int_{\mathbb R^n} 
   \left| (1-u(x))^{\frac{-p+1}{2}} - (1-u(z))^{\frac{-p+1}{2}}  \right|^2 J(x-z) dx dz . 
 \end{equation*}  
Test the stability inequality, Corollary \ref{stablein},  on  $\zeta=(1-u)^{\frac{-p+1}{2}}-1$ to get 
  \begin{eqnarray*}\label{}
&&\sqrt{\lambda\gamma} p \int_{\Omega} (1-u)^{-\frac{p+1}{2}}  (1-v)^{-\frac{p+1}{2}} [(1-u)^{\frac{1-p}{2}}-1]^2 dx 
\\&\le&  \frac{1}{2}\int_{\RR^n} \int_{\RR^n} \left| (1-u(x))^{\frac{-p+1}{2}}- (1-u(z))^{\frac{-p+1}{2}} \right|^2 J(x-z) dz dx . 
\end{eqnarray*}  
 Combining above inequalities, we conclude 
 \begin{equation}\label{rruv}
\sqrt{\lambda\gamma} p \int_{\Omega} (1-u)^{-\frac{p+1}{2}}  (1-v)^{-\frac{p+1}{2}} [(1-u)^{\frac{1-p}{2}}-1]^2 dx
\le \frac{(p-1)^2}{4p}  \lambda \int_{\Omega}  (1-u)^{-p }(1-v)^{-p} dx . 
\end{equation}  
Expanding the left-hand side of the inequality and rearranging we get 
\begin{eqnarray}\label{l1}
&&\sqrt{\lambda\gamma} p(1-\epsilon) \int_{\Omega} (1-u)^{-\frac{p+1}{2}}  (1-v)^{-\frac{p+1}{2}} (1-u)^{-p+1} dx 
\\&&\nonumber \le \frac{(p-1)^2}{4p}  \lambda \int_{\Omega}  (1-u)^{-p} (1-v)^{-p} dx 
 +  \frac{ \sqrt{\lambda\gamma}  p}{\epsilon}   \int_{\Omega}  (1-u)^{-\frac{p+1}{2}}  (1-v)^{\frac{p+1}{2}}  dx, 
\end{eqnarray}  
where we have used the inequality $a\le \frac{\epsilon}{2}a^2+\frac{1}{2\epsilon}$ for any $\epsilon>0$. Similarly, 
\begin{eqnarray}\label{l2}
&&\sqrt{\lambda\gamma} p(1-\epsilon) \int_{\Omega} (1-v)^{-\frac{p+1}{2}}  (1-u)^{-\frac{p+1}{2}} (1-v)^{-p+1} dx 
\\&&\nonumber \le \frac{(p-1)^2}{4p}  \gamma \int_{\Omega}  (1-u)^{-p} (1-v)^{-p} dx 
 +  \frac{ \sqrt{\lambda\gamma}  p}{\epsilon}   \int_{\Omega}  (1-u)^{-\frac{p+1}{2}}  (1-v)^{-\frac{p+1}{2}}  dx . 
\end{eqnarray}  
Note that from the Cauchy-Schwarz inequality we get 
\begin{eqnarray*}\label{}
&&\int_{\Omega} (1-u)^{-\frac{p+1}{2}}  (1-v)^{-\frac{p+1}{2}} (1-u)^{-p+1} dx   \int_{\Omega} (1-v)^{-\frac{p+1}{2}}  (1-u)^{-\frac{p+1}{2}} (1-v)^{-p+1} dx \\&& \ge \left( \int_{\Omega}  (1-u)^{-p} (1-v)^{-p} dx  \right)^2  . 
\end{eqnarray*}  
From this and 
multiplying both sides of the above (\ref{l1}) and (\ref{l2}) we conclude 
\begin{equation*}\label{}
\lambda\gamma \left[ p^2(1-\epsilon)^2- \left(\frac{(p-1)^2}{4p}\right)^2\right]\left( \int_{\Omega}  (1-u)^{-p} (1-v)^{-p} dx  \right)^2 \le C_{\epsilon,\lambda,\gamma} \left[   \int_{\Omega}  (1-u)^{-\frac{p+1}{2}}  (1-v)^{-\frac{p+1}{2}}  dx\right]^2 ,
\end{equation*}  
for small $\epsilon>0$. Note that $p^2- (\frac{(p-1)^2}{4p})^2>0$ when $p>0$. Therefore, taking small enough  $\epsilon>0$ and applying the H\"{o}lder's inequality when $p>1$,  we complete the proof. 

 \end{proof} 

In the next lemmata,  we provide integral $L^q(\Omega)$ estimates for Gelfand, Lane-Emden and MEMS systems. We start with the Gelfand system and establish a relation between $\int_{\Omega} e^{\frac{2t+1}{2}u} e^{\frac{v}{2}} dx$ and $\int_{\Omega} e^{\frac{2t+1}{2}v} e^{\frac{u}{2}} dx$ for some constant $t>\frac{1}{2}$. 
 
 \begin{lemma}\label{lemuvgt} Under the same assumptions as Lemma \ref{lemuvg}, set 
 \begin{equation*}
 X:=\int_{\Omega} e^{\frac{2t+1}{2}u} e^{\frac{v}{2}} dx,  Y:=\int_{\Omega} e^{\frac{2t+1}{2}v} e^{\frac{u}{2}} dx, Z:=\int_{\Omega} e^{u} dx , W:= \int_{\Omega} e^{v} dx, 
 \end{equation*}
 where $t>\frac{1}{2}$. Then, 
 \begin{eqnarray}\label{lgXg}
 \begin{array}{lcl} 
\sqrt{\lambda\gamma} X \le (\frac{t }{4}+\epsilon) \lambda X^{\frac{2t-1}{2t}} Y^{\frac{1}{2t}} + C_{\epsilon,\lambda,\gamma,|\Omega|} Z, \\ 
\sqrt{\lambda\gamma} Y \le (\frac{t }{4}+\epsilon)  \gamma Y^{\frac{2t-1}{2t}} X^{\frac{1}{2t}} + C_{\epsilon,\lambda,\gamma,|\Omega|} W,
\end{array} 
 \end{eqnarray}
 where $C_{\epsilon,\lambda,\gamma,|\Omega|} $ is a positive constant. 
 \end{lemma}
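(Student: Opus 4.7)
The plan is to test the stability inequality (\ref{stabilityG}) with the choice $\zeta = e^{tu/2} - 1$ (which vanishes on $\mathbb{R}^n \setminus \Omega$ since $u = 0$ there), following closely the template of Lemma \ref{lemuvg} but with the extra parameter $t$. The second inequality in (\ref{lgXg}) will follow by the symmetric choice $\zeta = e^{tv/2} - 1$ together with the second equation $\mathcal{L}v = \gamma e^u$, so I focus on the first.

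Expanding $(e^{tu/2} - 1)^2 = e^{tu} - 2e^{tu/2} + 1$, the left-hand side of (\ref{stabilityG}) becomes $\sqrt{\lambda\gamma}(X - 2M + L)$, where $M := \int_\Omega e^{(t+1)u/2 + v/2}\,dx$ and $L := \int_\Omega e^{(u+v)/2}\,dx \ge 0$. For the right-hand side, I would first establish the pointwise bound $(e^{t\beta/2} - e^{t\alpha/2})^2 \le \tfrac{t}{4}(\beta - \alpha)(e^{t\beta} - e^{t\alpha})$ by Cauchy--Schwarz on $\int_\alpha^\beta \tfrac{t}{2} e^{ts/2}\,ds$, then use Lemma \ref{fgprop} and the first equation $\mathcal{L}u = \lambda e^v$ to deduce
\[
\frac{1}{2}\iint \bigl(\zeta(x)-\zeta(z)\bigr)^2 J(x-z)\,dx\,dz \;\le\; \frac{t}{4}\int_\Omega e^{tu}\,\mathcal{L}u\,dx \;=\; \frac{t\lambda}{4}\int_\Omega e^{tu+v}\,dx.
\]
A H\"older step with conjugate exponents $p = \tfrac{2t}{2t-1}$ and $q = 2t$ (both in $(1,\infty)$ precisely because $t > \tfrac{1}{2}$) then bounds $\int_\Omega e^{tu+v}\,dx \le X^{(2t-1)/(2t)} Y^{1/(2t)}$, with the exponents of $u$ and $v$ matching by the algebraic identities $t = \tfrac{(2t+1)(2t-1)+1}{4t}$ and $1 = \tfrac{(2t-1)+(2t+1)}{4t}$.

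The key absorption step is to control the cross term $2M$. Writing the integrand as
\[
e^{(t+1)u/2 + v/2} = \bigl(e^{tu+v}\bigr)^{1/2}\bigl(e^u\bigr)^{1/2}
\]
and applying $ab \le \tfrac{\delta}{2}a^2 + \tfrac{1}{2\delta}b^2$ yields $2M \le \delta\int_\Omega e^{tu+v}\,dx + \tfrac{1}{\delta} Z \le \delta X^{(2t-1)/(2t)}Y^{1/(2t)} + \tfrac{1}{\delta}Z$. Discarding the nonnegative $L$ term from the left and combining everything,
\[
\sqrt{\lambda\gamma}\, X \;\le\; \Bigl(\tfrac{t\lambda}{4} + \delta\sqrt{\lambda\gamma}\Bigr) X^{(2t-1)/(2t)} Y^{1/(2t)} \;+\; \tfrac{\sqrt{\lambda\gamma}}{\delta}\, Z.
\]
Given any $\epsilon > 0$, choosing $\delta \le \epsilon\sqrt{\lambda/\gamma}$ converts the bracketed factor into $(\tfrac{t}{4}+\epsilon)\lambda$ and turns $\tfrac{\sqrt{\lambda\gamma}}{\delta}$ into a constant $C_{\epsilon,\lambda,\gamma,|\Omega|}$, proving the first inequality of (\ref{lgXg}).

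The main subtlety will be locating the correct Young split of $M$: the factorization $(e^{tu+v})^{1/2}(e^u)^{1/2}$ is essentially forced, because to match the target density $e^{tu+v}$ (so that the resulting piece is absorbed into the $X^{(2t-1)/(2t)}Y^{1/(2t)}$ term) while leaving a second factor carrying only $u$ (so it integrates to $Z$ rather than $Z+W$), the exponent of $v$ in the first factor must equal $1$. This is precisely what enables the asymmetric estimate with only $Z$ on the right-hand side. Apart from this algebraic bookkeeping and the condition $t > \tfrac{1}{2}$ guaranteeing admissible H\"older exponents, the argument is structurally parallel to Lemma \ref{lemuvg} (which corresponds to $t = 1$), and I foresee no further obstacle.
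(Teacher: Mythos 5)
Your proposal is correct and follows essentially the same route as the paper: test the stability inequality with $\zeta = e^{tu/2}-1$, combine with the weak form of $\mathcal{L}u = \lambda e^v$ tested against $e^{tu}-1$ via the pointwise convexity bound, then expand the square and control the cross term $M$ by the Young split $(e^{tu+v})^{1/2}(e^u)^{1/2}$ together with H\"older with exponents $\tfrac{2t}{2t-1}$ and $2t$. The only cosmetic difference is that you keep a free Young parameter $\delta$ and specialize at the end, whereas the paper builds the $\sqrt{\lambda/\gamma}$ weight into the Young step directly; the resulting constants agree.
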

   
 \begin{proof} Multiply the second equation of $ (G)_{\lambda,\gamma}$ with $e^{tu}-1$ when $t>\frac{1}{2}$ is a constant. Integrating implies that 
 \begin{equation*}
 \lambda \int_{\Omega} (e^{tu}-1) e^v dx= \int_{\Omega} \mathcal L u (e^{tu}-1) dx. 
 \end{equation*} 
 From Lemma \ref{fgprop}, we get
 \begin{equation*}
\int_{\Omega} \mathcal L u (e^{tu}-1) dx =  \frac{1}{2} \int_{\mathbb R^n} \int_{\mathbb R^n}    \left[u(x)-u(z)  \right] \left[e^{tu(x)}-e^{tu(z)} \right] J(x-z) dx dz . 
 \end{equation*} 
 Note that for $\alpha,\beta\in\mathbb R$, one can see that 
 \begin{equation*}
 \left| e^{\frac{\beta}{2}} - e^{\frac{\alpha}{2}} \right|^2 \le \frac{1}{4} (e^\beta-e^\alpha)(\beta-\alpha). 
 \end{equation*}
 Applying the above inequality for $\alpha=tu(z)$ and $\beta=tu(x)$, we obtain 
  \begin{equation*}
 \left| e^{\frac{tu(x)}{2}} - e^{\frac{tu(z)}{2}} \right|^2 \le \frac{t}{4} (e^{tu(x)}-e^{tu(z)})(u(x)-u(z)). 
 \end{equation*}
From the above, we conclude 
 \begin{equation*}
 \lambda \int_{\Omega} (e^{tu}-1) e^v dx \ge  \frac{2}{t} \int_{\mathbb R^n} \int_{\mathbb R^n} 
  \left| e^{\frac{tu(x)}{2}} - e^{\frac{tu(z)}{2}} \right|^2J(x-z) dx dz . 
 \end{equation*}  
Test the stability inequality, Corollary \ref{stablein},  on  $\zeta=e^{\frac{tu}{2}}-1$ to get 
  \begin{equation*}\label{}
\sqrt{\lambda\gamma} \int_{\Omega}  e^{\frac{u+v}{2}} (e^{\frac{tu}{2}}-1)^2 dx \le  \frac{1}{2}\int_{\RR^n} \int_{\RR^n} \left|e^{\frac{tu(x)}{2}}- e^{\frac{tu(z)}{2}} \right|^2 J(x-z) dz dx . 
\end{equation*}  
 Combining above inequalities, we conclude 
 \begin{equation}\label{uvt2}
\sqrt{\lambda\gamma} \int_{\Omega}  e^{\frac{u+v}{2}} (e^{\frac{tu}{2}}-1)^2 dx \le  \frac{t}{4}  \lambda \int_{\Omega} (e^{tu}-1) e^v dx . 
\end{equation}  
On the other hand,  from the Young inequality we have 
  \begin{equation}\label{uvte1}
\int_{\Omega} e^{\frac{t+1}{2}u} e^{\frac{v}{2}}  dx \le  \frac{\epsilon}{2} \sqrt{\frac{\lambda }{\gamma}} \int_{\Omega} e^{tu} e^{v}  dx  + \frac{1}{2\epsilon} \sqrt{\frac{\gamma }{\lambda}} \int_{\Omega} e^{u}  dx , 
   \end{equation} 
   where $\epsilon$ is a positive constant.  In addition, from the H\"{o}lder inequality we get   
     \begin{equation}\label{uvte2}
   \int_{\Omega}  e^{t u} e^v dx \le \left( \int_{\Omega} e^{\frac{2t+1}{2}u} e^{\frac{v}{2}}  dx \right)^{\frac{2t-1}{2t}} \left( \int_{\Omega} e^{\frac{2t+1}{2}v} e^{\frac{u}{2}}  dx \right)^{\frac{1}{2t}} . 
      \end{equation} 
 Now, expanding both sides of (\ref{uvt2}) we have
    \begin{equation}\label{uvte3}
\sqrt{\lambda\gamma} \int_{\Omega}  e^{\frac{2t+1}{2}u} e^{\frac{v}{2}} dx \le  \frac{t}{4}  \lambda \int_{\Omega} e^{tu} e^v dx + 2 \sqrt{\lambda\gamma} \int_{\Omega}  e^{\frac{t+1}{2}u} e^{\frac{v}{2}} dx . 
\end{equation}  
 Combining (\ref{uvte1}), (\ref{uvte2}) and (\ref{uvte3}) proves the first inequality in (\ref{lgXg}). With similar arguments one can show the second inequality.

    \end{proof}

We now consider the Lane-Emden system and  establish a relation between $\int_{\Omega} (1+u)^{\frac{p+1}{2}+t} (1+v)^{\frac{p-1}{2}} dx$  and $\int_{\Omega} (1+v)^{\frac{p+1}{2}+t} (1+u)^{\frac{p-1}{2}}  dx$ for some constant $t>{1}$.

  \begin{lemma}\label{lemuvet} Under the same assumptions as Lemma \ref{lemuve}, set 
 \begin{eqnarray*}
 &&X:=\int_{\Omega} (1+u)^{\frac{p-1}{2}+t+1} (1+v)^{\frac{p-1}{2}} dx,  \ \ Y:=\int_{\Omega} (1+v)^{\frac{p-1}{2}+t+1} (1+u)^{\frac{p-1}{2}}  dx, 
 \\&&Z:=\int_{\Omega} (1+u)^{\frac{p-1}{2}}(1+v)^{\frac{p-1}{2}}  dx , 
 \end{eqnarray*}
 for $t>1$. Then, for some constant $0<\epsilon<1$ we get 
 \begin{eqnarray}\label{lgXet}
 \begin{array}{lcl} 
\sqrt{\lambda\gamma} p(1-\epsilon) X \le \frac{(t+1)^2 }{4t}  \lambda X^{\frac{2t-p+1}{2(t+1)}} Y^{\frac{p+1}{2(t+1)}} +  C_{\epsilon,\lambda,\gamma,|\Omega|} Z, \\ 
\sqrt{\lambda\gamma} p(1-\epsilon)  Y \le \frac{(t+1)^2 }{4t}  \gamma Y^{\frac{2t-p+1}{2(t+1)}} X^{\frac{p+1}{2(t+1)}} +  C_{\epsilon,\lambda,\gamma,|\Omega|} Z,
\end{array} 
 \end{eqnarray}
 where  $C_{\epsilon,\lambda,\gamma,|\Omega|} $ is a positive constant. 
 \end{lemma}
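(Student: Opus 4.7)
The plan is to mimic the argument of Lemma \ref{lemuvgt}, replacing the exponential test functions with their power counterparts, exactly as Lemma \ref{lemuve} did with respect to Lemma \ref{lemuvg}. Concretely, I fix $t>1$, multiply the equation $\mathcal{L}u=\lambda(1+v)^p$ by $(1+u)^t-1\ge 0$, and integrate to obtain
\begin{equation*}
\lambda\int_\Omega\bigl[(1+u)^t-1\bigr](1+v)^p\,dx=\int_\Omega\mathcal{L}u\,\bigl[(1+u)^t-1\bigr]\,dx.
\end{equation*}
Lemma \ref{fgprop} rewrites the right-hand side as a double integral against $J$, and the pointwise elementary inequality
\begin{equation*}
\bigl[(1+\alpha)^t-(1+\beta)^t\bigr](\alpha-\beta)\ge\frac{4t}{(t+1)^2}\bigl|(1+\alpha)^{(t+1)/2}-(1+\beta)^{(t+1)/2}\bigr|^2
\end{equation*}
(the same one used, for $t=p$, in Lemma \ref{lemuve}) gives the lower bound
\begin{equation*}
\lambda\int_\Omega(1+u)^t(1+v)^p\,dx\ge \frac{2t}{(t+1)^2}\int_{\mathbb R^n}\!\!\int_{\mathbb R^n}\bigl|(1+u(x))^{(t+1)/2}-(1+u(z))^{(t+1)/2}\bigr|^2 J(x-z)\,dx\,dz.
\end{equation*}

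Next I test the Lane--Emden stability inequality \eqref{stabilityE} against $\zeta=(1+u)^{(t+1)/2}-1$ (which vanishes outside $\Omega$), producing on the right exactly the same Dirichlet-type double integral. Combining the two yields
\begin{equation*}
p\sqrt{\lambda\gamma}\int_\Omega (1+u)^{(p-1)/2}(1+v)^{(p-1)/2}\bigl[(1+u)^{(t+1)/2}-1\bigr]^2\,dx\le\frac{(t+1)^2}{4t}\lambda\int_\Omega(1+u)^t(1+v)^p\,dx.
\end{equation*}
Expanding the square on the left gives $X-2M+Z$, where $M:=\int_\Omega(1+u)^{(p+t)/2}(1+v)^{(p-1)/2}\,dx$. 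Since $(p+t)/2<(p-1)/2+t+1$, Young's inequality $x^\theta\le\epsilon x+C_\epsilon$ with $x=(1+u)^{(p-1)/2+t+1}$ and $\theta=(p+t)/(p+1+2t)<1$ gives
\begin{equation*}
2M\le\epsilon X+C_\epsilon\!\int_\Omega(1+v)^{(p-1)/2}\,dx\le\epsilon X+C_\epsilon(Z+|\Omega|),
\end{equation*}
where in the last step I used $(1+u)\ge1$ to bound $\int(1+v)^{(p-1)/2}$ by $Z+|\Omega|$.

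Finally, the right-hand side $\int_\Omega(1+u)^t(1+v)^p\,dx$ is controlled by a H\"older interpolation between $X$ and $Y$: writing
\begin{equation*}
(1+u)^t(1+v)^p=\bigl[(1+u)^{(p-1)/2+t+1}(1+v)^{(p-1)/2}\bigr]^\alpha\bigl[(1+v)^{(p-1)/2+t+1}(1+u)^{(p-1)/2}\bigr]^\beta,
\end{equation*}
and matching exponents in $u$ and $v$ forces $\alpha=\frac{2t-p+1}{2(t+1)}$ and $\beta=\frac{p+1}{2(t+1)}$ (and $\alpha+\beta=1$); H\"older then delivers
\begin{equation*}
\int_\Omega(1+u)^t(1+v)^p\,dx\le X^{\frac{2t-p+1}{2(t+1)}}Y^{\frac{p+1}{2(t+1)}}.
\end{equation*}
Substituting into the combined inequality and absorbing the $\epsilon X$ term into the left-hand side yields the first bound in \eqref{lgXet}. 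The second bound follows by swapping the roles of $(u,v)$: multiply the other equation $\mathcal{L}v=\gamma(1+u)^p$ by $(1+v)^t-1$ and test stability against $\zeta=(1+v)^{(t+1)/2}-1$. I expect the one delicate bookkeeping point to be tracking the $(1-\epsilon)$ constant and the absorption of the cross term $M$ uniformly in a suitable range of $t>1$, so that later (when the lemma is iterated in Section \ref{secreg}) the constant $\frac{(t+1)^2}{4t}$ remains strictly smaller than $p^2$ in the optimal dimension range; but the analytic ingredients above are all straightforward extensions of those already used in Lemmas \ref{lemuve} and \ref{lemuvgt}.
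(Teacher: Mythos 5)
Your argument follows the paper's proof essentially step by step: multiply $\mathcal{L}u=\lambda(1+v)^p$ by $(1+u)^t-1$, apply Lemma \ref{fgprop} and the elementary pointwise inequality $[(1+\alpha)^t-(1+\beta)^t](\alpha-\beta)\ge\frac{4t}{(t+1)^2}|(1+\alpha)^{(t+1)/2}-(1+\beta)^{(t+1)/2}|^2$, test the stability inequality \eqref{stabilityE} with $\zeta=(1+u)^{(t+1)/2}-1$, absorb the cross term, and close with the same H\"older interpolation $\int(1+u)^t(1+v)^p\le X^{\frac{2t-p+1}{2(t+1)}}Y^{\frac{p+1}{2(t+1)}}$. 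The only cosmetic difference is how you absorb the cross term $M$: the paper applies $a\le\frac{\epsilon}{2}a^2+\frac{1}{2\epsilon}$ pointwise to $a=(1+u)^{(t+1)/2}$ before multiplying by $(1+u)^{(p-1)/2}(1+v)^{(p-1)/2}$, giving $M\le\frac{\epsilon}{2}X+\frac{1}{2\epsilon}Z$, while you use the fractional-power Young inequality $x^\theta\le\epsilon x+C_\epsilon$ after multiplying; the two are equivalent and yield the same $(1-\epsilon)X$ on the left. (Your extra $|\Omega|$ term is unnecessary since $(1+u)^{(p-1)/2}\ge1$ already gives $\int(1+v)^{(p-1)/2}\le Z$, but that is harmless.)
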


 \begin{proof} Let $t>1$ be a constant.  Multiply the second equation of $ (E)_{\lambda,\gamma}$ with $(1+u)^t-1$  and  integrate to get   
 \begin{equation*}
 \lambda \int_{\Omega} [(1+u)^t-1] (1+v)^p dx= \int_{\Omega} \mathcal L u [(1+u)^t-1] dx. 
 \end{equation*} 
 From Lemma \ref{fgprop}, we get
 \begin{equation*}
\int_{\Omega} \mathcal L u [(1+u)^t-1] dx =  \frac{1}{2} \int_{\mathbb R^n} \int_{\mathbb R^n}    \left[u(x)-u(z)  \right] \left[(1+u(x))^t-(1+u(z))^t \right] J(x-z) dx dz  . 
 \end{equation*} 
 Note that for $\alpha,\beta\in\mathbb R$, one can see that 
 \begin{equation*}
  [(1+\alpha)^t-(1+\beta)^t](\alpha-\beta) \ge \frac{4t}{(t+1)^2}  \left| (1+\alpha)^{\frac{t+1}{2}} - (1+\beta)^{\frac{t+1}{2}}  \right|^2 .  
 \end{equation*}
 Applying the above inequality for $\alpha=u(x)$ and $\beta=u(z)$, we obtain 
  \begin{equation*}
    [(1+u(x))^t-(1+u(z))^t][u(x)-u(z)] \ge \frac{4t}{(t+1)^2}  \left| (1+u(x))^{\frac{t+1}{2}} - (1+u(z))^{\frac{t+1}{2}}  \right|^2 .  
 \end{equation*}
From the above, we conclude 
 \begin{equation*}
 \lambda \int_{\Omega}  (1+u)^t (1+v)^p dx \ge \frac{4t}{(t+1)^2}  \frac{1}{2}  \int_{\mathbb R^n} \int_{\mathbb R^n} 
   \left| (1+u(x))^{\frac{t+1}{2}} - (1+u(z))^{\frac{t+1}{2}}  \right|^2 J(x-z) dx dz . 
 \end{equation*}  
Test the stability inequality, Corollary \ref{stablein},  on  $\zeta=(1+u)^{\frac{t+1}{2}}-1$ to get 
  \begin{eqnarray*}\label{}
&&\sqrt{\lambda\gamma} p \int_{\Omega} (1+u)^{\frac{p-1}{2}}  (1+v)^{\frac{p-1}{2}} [(1+u)^{\frac{t+1}{2}}-1]^2 dx 
\\&\le&  \frac{1}{2}\int_{\RR^n} \int_{\RR^n} \left| (1+u(x))^{\frac{t+1}{2}}- (1+u(z))^{\frac{t+1}{2}} \right|^2 J(x-z) dz dx , 
\end{eqnarray*}  
 Combining above inequalities, we conclude 
 \begin{equation}\label{rruv}
\sqrt{\lambda\gamma} p \int_{\Omega} (1+u)^{\frac{p-1}{2}}  (1+v)^{\frac{p-1}{2}} [(1+u)^{\frac{t+1}{2}}-1]^2 dx 
\le \frac{(t+1)^2}{4t}  \lambda \int_{\Omega}  (1+u)^t (1+v)^p dx . 
\end{equation}  
Expanding the left-hand side of the inequality and rearranging we get 
\begin{eqnarray}\label{l1e}
&&\sqrt{\lambda\gamma} p(1-\epsilon) \int_{\Omega} (1+u)^{\frac{p-1}{2}}  (1+v)^{\frac{p-1}{2}} (1+u)^{t+1} dx 
\\&&\nonumber \le \frac{(t+1)^2}{4t}  \lambda \int_{\Omega}  (1+u)^t (1+v)^p dx 
 +  \frac{ \sqrt{\lambda\gamma}  p}{\epsilon}   \int_{\Omega}  (1+u)^{\frac{p-1}{2}}  (1+v)^{\frac{p-1}{2}}  dx, 
\end{eqnarray}  
where we have used the inequality $a\le \frac{\epsilon}{2}a^2+\frac{1}{2\epsilon}$ for any $\epsilon>0$. From the H\"{o}lder's inequality we get 
\begin{eqnarray*}\label{}
&& \int_{\Omega}  (1+u)^t (1+v)^p dx 
\\& \le &\left[  \int_{\Omega} (1+u)^{\frac{p-1}{2}}  (1+v)^{\frac{p-1}{2}} (1+u)^{t+1} dx  \right]^{\frac{1}{\beta}} 
  \left[    \int_{\Omega} (1+v)^{\frac{p-1}{2}}  (1+u)^{\frac{p-1}{2}} (1+v)^{t+1} dx  \right]^{1-\frac{1}{\beta}} , 
\end{eqnarray*}  
where $\beta=\frac{2(t+1)}{2t-p+1}$. This and (\ref{l1e}) completes the proof of the first estimate in (\ref{lgXet}). Similarly, one can show the second  estimate.

 \end{proof} 

We now consider the MEMS system with singular power nonlinearities and  establish a relation between $\int_{\Omega} (1-u)^{\frac{1-p}{2}-t} (1-v)^{-\frac{p+1}{2}} dx$  and $\int_{\Omega} (1-v)^{\frac{1-p}{2}-t} (1-u)^{-\frac{p+1}{2}}  dx$ for some constant $t>{1}$.

  \begin{lemma}\label{lemuvmt} Under the same assumptions as Lemma \ref{lemuvm}, set 
 \begin{eqnarray*}
&& X:=\int_{\Omega} (1-u)^{-\frac{p+1}{2}-t+1} (1-v)^{-\frac{p+1}{2}} dx,  \ \ Y:=\int_{\Omega} (1-v)^{-\frac{p+1}{2}-t+1} (1-u)^{-\frac{p+1}{2}}  dx,
 \\&& Z:=\int_{\Omega} (1-u)^{-\frac{p+1}{2}}(1-v)^{-\frac{p+1}{2}} , 
 \end{eqnarray*}
 for $t>1$. Then, for some constant $0<\epsilon<1$ we get 
 \begin{eqnarray}\label{lgXmt}
 \begin{array}{lcl} 
\sqrt{\lambda\gamma} p(1-\epsilon) X \le \frac{(t-1)^2 }{4t}  \lambda X^{\frac{2t-p-1}{2(t-1)}} Y^{\frac{p-1}{2(t-1)}} +  C_{\epsilon,\lambda,\gamma,|\Omega|} Z, \\ 
\sqrt{\lambda\gamma} p(1-\epsilon)  Y \le \frac{(t-1)^2 }{4t}  \gamma Y^{\frac{2t-p-1}{2(t-1)}} X^{\frac{p-1}{2(t-1)}} +  C_{\epsilon,\lambda,\gamma,|\Omega|} Z,
\end{array} 
 \end{eqnarray}
 where $C_{\epsilon,\lambda,\gamma,|\Omega|} $ is a positive constant. 
 \end{lemma}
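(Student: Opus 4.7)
The plan is to mirror the argument of Lemma \ref{lemuvet}, adapting the monotone multiplier to the singular nonlinearity so that the algebra of exponents lines up with the MEMS stability inequality (\ref{stabilityM}). I would fix $t>1$ and multiply the second equation of $(M)_{\lambda,\gamma}$ by the nonnegative test function $\phi(u)=(1-u)^{-t}-1$, which vanishes outside $\Omega$. After integration, Lemma \ref{fgprop} rewrites the right-hand side as the symmetric double integral $\tfrac{1}{2}\iint[u(x)-u(z)][(1-u(x))^{-t}-(1-u(z))^{-t}]J(x-z)\,dx\,dz$, and the elementary convexity inequality
\begin{equation*}
[(1-\alpha)^{-t}-(1-\beta)^{-t}](\alpha-\beta)\;\ge\;\frac{4t}{(t-1)^{2}}\bigl|(1-\alpha)^{(1-t)/2}-(1-\beta)^{(1-t)/2}\bigr|^{2},
\end{equation*}
whose verification is the same convexity computation as in Lemma \ref{lemuvm} with $p$ replaced by $t$, converts the double integral into a squared difference of $(1-u)^{(1-t)/2}$.

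Next I would test the stability inequality (\ref{stabilityM}) with the admissible function $\zeta=(1-u)^{(1-t)/2}-1$ and match the two $\iint J$ integrals to obtain
\begin{equation*}
\sqrt{\lambda\gamma}\,p\int_{\Omega}(1-u)^{-\frac{p+1}{2}}(1-v)^{-\frac{p+1}{2}}\bigl[(1-u)^{\frac{1-t}{2}}-1\bigr]^{2}dx\le\frac{(t-1)^{2}}{4t}\,\lambda\int_{\Omega}[(1-u)^{-t}-1](1-v)^{-p}dx.
\end{equation*}
Expanding the square on the left produces a leading term equal to $\sqrt{\lambda\gamma}\,p\,X$, a constant multiple of $Z$, and a cross term $\int(1-u)^{-(p+t)/2}(1-v)^{-(p+1)/2}dx$ which the Cauchy--Schwarz inequality bounds by $X^{1/2}Z^{1/2}$, so that the elementary bound $a\le\tfrac{\epsilon}{2}a^{2}+\tfrac{1}{2\epsilon}$ absorbs it as $\epsilon X+C_{\epsilon}Z$. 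The right-hand side of the displayed inequality is then controlled by Hölder's inequality with exponent $\beta=\tfrac{2(t-1)}{2t-p-1}$, yielding $\int(1-u)^{-t}(1-v)^{-p}dx\le X^{(2t-p-1)/(2(t-1))}\,Y^{(p-1)/(2(t-1))}$. Assembling these bounds gives the first inequality in (\ref{lgXmt}); the second follows from a symmetric argument, multiplying the first equation of $(M)_{\lambda,\gamma}$ by $(1-v)^{-t}-1$ and interchanging the roles of $u$ and $v$.

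The computation is essentially bookkeeping rather than a new analytic step, but the signs require care: unlike the Lane--Emden case, $(1-u)^{-t}$ is a decreasing function of $1-u$, so verifying the convexity inequality is best done through the substitution $a=1-\alpha$, $b=1-\beta$, which reduces it to the same identity already used in Lemma \ref{lemuvm}. The main point to keep track of is that the Hölder exponents are genuine convex combinations only when $2t-p-1>0$, i.e.\ $t>(p+1)/2$; this is the admissibility range in which the bounds (\ref{lgXmt}) make sense, and it is harmless in the downstream applications since $t$ may be chosen arbitrarily large.
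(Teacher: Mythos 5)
Your proposal is correct and follows essentially the same route as the paper's proof: you use the same multiplier $(1-u)^{-t}-1$, the same pointwise convexity inequality, the same test function $\zeta=(1-u)^{(1-t)/2}-1$ in the stability inequality (\ref{stabilityM}), and the same H\"older split with exponent $\beta=\tfrac{2(t-1)}{2t-p-1}$. The only cosmetic difference is in handling the cross term from expanding $[(1-u)^{(1-t)/2}-1]^2$: you bound it by $X^{1/2}Z^{1/2}$ via Cauchy--Schwarz and then apply Young's inequality on the integrals, whereas the paper applies the pointwise bound $a\le\tfrac{\epsilon}{2}a^2+\tfrac{1}{2\epsilon}$ directly inside the integrand; both give $(1-\epsilon)X - C_\epsilon Z$. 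Your final observation is a genuine and worthwhile catch: the H\"older exponent $\beta=\tfrac{2(t-1)}{2t-p-1}$ is a legitimate conjugate pair only when $2t-p-1>0$, i.e.\ $t>\tfrac{p+1}{2}$, which is strictly stronger than the stated $t>1$ whenever $p>1$; the lemma's hypothesis should really read $t>\tfrac{p+1}{2}$, though this is harmless in the application to Theorem \ref{thmg} since $t$ is taken large there.
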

\begin{proof} Let $t>1$ be a constant.  Multiply the second equation of $ (M)_{\lambda,\gamma}$ with $(1-u)^{-t}-1$  and  integrate to get   
 \begin{equation*}
 \lambda \int_{\Omega} [(1-u)^{-t}-1] (1-v)^p dx= \int_{\Omega} \mathcal L u [(1-u)^{-t}-1] dx. 
 \end{equation*} 
 From Lemma \ref{fgprop}, we get
 \begin{equation*}
\int_{\Omega} \mathcal L u [(1-u)^{-t}-1] dx =  \frac{1}{2} \int_{\mathbb R^n} \int_{\mathbb R^n}    \left[u(x)-u(z)  \right] \left[(1-u(x))^{-t}-(1-u(z))^{-t} \right] J(x-z) dx dz . 
 \end{equation*} 
 Note that for $\alpha,\beta\in\mathbb R$, one can see that 
 \begin{equation*}
  [(1-\alpha)^{-t}-(1-\beta)^{-t}](\alpha-\beta) \ge \frac{4t}{(t-1)^2}  \left| (1-\alpha)^{\frac{-t+1}{2}} - (1-\beta)^{\frac{-t+1}{2}}  \right|^2 .  
 \end{equation*}
 Applying the above inequality for $\alpha=u(x)$ and $\beta=u(z)$, we obtain 
  \begin{equation*}
    [(1-u(x))^{-t}-(1-u(z))^{-t}][u(x)-u(z)] \ge \frac{4t}{(t-1)^2}  \left| (1-u(x))^{\frac{-t+1}{2}} - (1-u(z))^{\frac{-t+1}{2}}  \right|^2 .  
 \end{equation*}
From the above, we conclude 
 \begin{equation*}
 \lambda \int_{\Omega}  (1-u)^{-t} (1-v)^{-p} dx \ge \frac{4t}{(t-1)^2}  \frac{1}{2}  \int_{\mathbb R^n} \int_{\mathbb R^n} 
   \left| (1-u(x))^{\frac{-t+1}{2}} - (1-u(z))^{\frac{-t+1}{2}}  \right|^2 J(x-z) dx dz . 
 \end{equation*}  
Test the stability inequality, Corollary \ref{stablein},  on  $\zeta=(1-u)^{\frac{-t+1}{2}}-1$ to get 
  \begin{eqnarray*}\label{}
&&\sqrt{\lambda\gamma} p \int_{\Omega} (1-u)^{-\frac{p+1}{2}}  (1-v)^{-\frac{p+1}{2}} [(1-u)^{\frac{-t+1}{2}}-1]^2 dx 
\\&\le&  \frac{1}{2}\int_{\RR^n} \int_{\RR^n} \left| (1-u(x))^{\frac{-t+1}{2}}- (1-u(z))^{\frac{-t+1}{2}} \right|^2 J(x-z) dz dx . 
\end{eqnarray*}  
 Combining above inequalities, we conclude 
 \begin{equation}\label{rruv}
\sqrt{\lambda\gamma} p \int_{\Omega} (1-u)^{-\frac{p+1}{2}}  (1-v)^{-\frac{p+1}{2}} [(1-u)^{\frac{-t+1}{2}}-1]^2 dx 
\le \frac{(t-1)^2}{4t}  \lambda \int_{\Omega}  (1-u)^{-t} (1-v)^{-p} dx . 
\end{equation}  
Expanding the left-hand side of the inequality and rearranging we get 
\begin{eqnarray}\label{l1e}
&&\sqrt{\lambda\gamma} p(1-\epsilon) \int_{\Omega} (1-u)^{-\frac{p+1}{2}}  (1-v)^{-\frac{p+1}{2}} (1-u)^{-t+1} dx 
\\&&\nonumber \le \frac{(t-1)^2}{4t}  \lambda \int_{\Omega}  (1-u)^{-t} (1-v)^{-p} dx 
 +  \frac{ \sqrt{\lambda\gamma}  p}{\epsilon}   \int_{\Omega}  (1-u)^{-\frac{p+1}{2}}  (1-v)^{-\frac{p+1}{2}}  dx, 
\end{eqnarray}  
where we have used the inequality $a\le \frac{\epsilon}{2}a^2+\frac{1}{2\epsilon}$ for any $\epsilon>0$ and $a\in\mathbb R$. From the H\"{o}lder's inequality we get 
\begin{eqnarray*}\label{}
&& \int_{\Omega}  (1-u)^{-t} (1-v)^{-p} dx 
\\& \le &\left[  \int_{\Omega} (1-u)^{-\frac{p+1}{2}}  (1-v)^{-\frac{p+1}{2}} (1-u)^{-t+1} dx  \right]^{\frac{1}{\beta}} 
  \left[    \int_{\Omega} (1-v)^{-\frac{p+1}{2}}  (1-u)^{-\frac{p+1}{2}} (1-v)^{-t+1} dx  \right]^{1-\frac{1}{\beta}} , 
\end{eqnarray*}  
where $\beta=\frac{2(t-1)}{2t-p-1}$. This and (\ref{l1e}) completes the proof of the first estimate in (\ref{lgXet}). Similarly, one can show the second  estimate.

 \end{proof}

In regards to the gradient system with superlinear nonlinearities satisfying (\ref{R}) we establish    an integral estimate that yields $L^2(\Omega)$ of the function $f'(u)g'(v)$. We then use this to conclude estimates on the nonlinearities of the gradient system.  Our methods and ideas in the proof are inspired by the ones developed in \cite{cf} and originally by Nedev in \cite{Nedev}.

\begin{lemma}\label{lemab}
Suppose  that $f$ and
$g$ both satisfy condition (\ref{R}) and $a:=f'(0)>0 $ and $ b:=g'(0)>0$. Assume that $ f',g'$ are convex and (\ref{deltaeps}) holds.   Let $ (\lambda^*,\gamma^*) \in \Upsilon$ and $ (u,v)$ denote the extremal solution associated with $ (H)_{\lambda^*,\gamma^*}$. 
Then,  there exists a positive constant $  C < \infty$ such that
\begin{equation*}
\int_{\Omega} f'(u) g'(v) (f'(u)-a) (g'(v)-b) \le C.
\end{equation*}
\end{lemma}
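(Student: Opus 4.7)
The plan is to test the stability inequality (\ref{stabilityH}) against the natural pair $\zeta = f'(u) - a$ and $\eta = g'(v) - b$. These vanish on $\mathbb R^n\setminus\Omega$ because $u=v=0$ there and $f'(0)=a$, $g'(0)=b$; since $f,g$ are convex and $u,v\ge 0$, both $\zeta$ and $\eta$ are nonnegative on $\Omega$. On the left-hand side of (\ref{stabilityH}), the first two terms $\int f''(u) g(v)\zeta^2$ and $\int f(u) g''(v)\eta^2$ are nonnegative, and the mixed term $2\int f'(u) g'(v)(f'(u)-a)(g'(v)-b)$ is exactly the quantity we want to bound.

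For the right-hand side I will use the pointwise nonlocal convexity identity: whenever $\phi\in C^1(\mathbb R)$ is convex, $\mathcal L(\phi(u))(x)\le \phi'(u(x))\,\mathcal L u(x)$, coming directly from $\phi(u(x))-\phi(u(z))\le \phi'(u(x))(u(x)-u(z))$ inserted inside the integrand of $\mathcal L$. Since $f'$ and $g'$ are convex by hypothesis and $\mathcal L$ annihilates constants, Lemma \ref{fgprop} lets me rewrite
\begin{equation*}
\tfrac{1}{2}\int_{\mathbb R^n}\!\!\int_{\mathbb R^n} |\zeta(x)-\zeta(z)|^2 J(x-z)\,dx\,dz \;=\; \int_\Omega \zeta\,\mathcal L\zeta \;\le\; \int_\Omega (f'(u)-a)\,f''(u)\,\mathcal L u,
\end{equation*}
and then substitute $\mathcal L u = \lambda f'(u) g(v)$ using the first equation of $(H)_{\lambda,\gamma}$, with the symmetric bound holding for $\eta$ with $\gamma$ in place of $\lambda$.

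Plugging into (\ref{stabilityH}) and decomposing $f'(u)f''(u)(f'(u)-a) = f''(u)(f'(u)-a)^2 + a\,f''(u)(f'(u)-a)$ (and analogously on the $g$-side), the $\int f''g\zeta^2$ and $\int fg''\eta^2$ pieces on the left cancel against the quadratic pieces produced on the right, yielding the simplified inequality
\begin{equation*}
2\int_\Omega f'(u) g'(v)(f'(u)-a)(g'(v)-b)\,dx \;\le\; a\int_\Omega f''(u) g(v)(f'(u)-a)\,dx + b\int_\Omega f(u) g''(v)(g'(v)-b)\,dx.
\end{equation*}

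The main obstacle, and the only place where hypothesis (\ref{deltaeps}) is needed, is to absorb the right-hand side back into the left-hand side up to a bounded remainder. Following the blueprint of Nedev \cite{Nedev} and its gradient-system adaptation in \cite{cf}, condition (\ref{deltaeps}) forces $f''$ (resp.\ $g''$) to grow no faster than a suitable sublinear power of $f'$ (resp.\ $g'$), so that a Young-type decomposition of the form $f''(u) g(v)(f'(u)-a) \le \varepsilon\,f'(u)g'(v)(f'(u)-a)(g'(v)-b) + C_\varepsilon R(u,v)$ holds with a residue $R\in L^1(\Omega)$. The $L^1$-bound on $R$ is produced independently of $(\lambda,\gamma)\in\Gamma_\sigma$ by testing the equations of $(H)_{\lambda,\gamma}$ against a fixed function (for instance the first Dirichlet eigenfunction of $\mathcal L$), which gives a uniform bound on $\int f'(u)g(v)$ and $\int f(u)g'(v)$ along the minimal branch. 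Choosing $\varepsilon$ small enough to absorb the $\varepsilon$-term into $2\int f'g'(f'-a)(g'-b)$ on the left then closes the inequality and produces the claimed uniform constant $C$.
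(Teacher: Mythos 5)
Your use of the pointwise nonlocal convexity inequality $\mathcal L(\phi(u))(x)\le \phi'(u(x))\,\mathcal L u(x)$, valid because $\phi=f'$ is convex and $u\ge 0$ so that $\zeta=f'(u)-a\ge 0$, is a clean and legitimate way to handle the nonlocal Dirichlet form, and is genuinely different from the paper's route (the paper instead writes $|f'(\beta)-f'(\alpha)|^2\le (h_1(\beta)-h_1(\alpha))(\beta-\alpha)$ with $h_1(s)=\int_0^s|f''|^2$ and then uses Lemma~\ref{fgprop} to convert to $\int h_1(u)\mathcal L u$). Up to and including the simplified inequality
\[
2\int_\Omega f'(u)g'(v)(f'(u)-a)(g'(v)-b)\,dx \;\le\; a\int_\Omega f''(u)(f'(u)-a)\,g(v)\,dx + b\int_\Omega f(u)\,g''(v)(g'(v)-b)\,dx ,
\]
your computation is correct.

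The gap is in the absorption step, and it is caused precisely by having cancelled the terms $\int f''(u)g(v)(f'(u)-a)^2$ and $\int f(u)g''(v)(g'(v)-b)^2$ \emph{exactly}. Consider the ratio of the integrand you want to absorb to the cross-term integrand:
\[
\frac{a\,f''(u)(f'(u)-a)\,g(v)}{f'(u)g'(v)(f'(u)-a)(g'(v)-b)} \;=\; a\,\frac{f''(u)}{f'(u)}\cdot \frac{g(v)}{g'(v)\big(g'(v)-b\big)} .
\]
Condition~(\ref{deltaeps}) does \emph{not} force $\frac{f''}{f'}$ to be bounded: take $f(u)=e^{\,e^{u}-1}$, which satisfies (\ref{R}), has $f'$ convex with $f'(0)=1>0$, and satisfies $\frac{[f'']^2}{f'''f'}\to 1>0$, yet $\frac{f''(u)}{f'(u)}=1+e^u\to\infty$. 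The factor $\frac{g(v)}{g'(v)(g'(v)-b)}$ does tend to $0$ as $v\to\infty$, but it is bounded away from $0$ on any compact set of $v$-values. Hence on the set $\{u\ \text{large},\ v\ \text{bounded}\}$ the ratio above is unbounded, so no $\varepsilon$ can absorb that region into the cross term, and the remainder $R=\mathbf 1_{\{u\ge M,\ v<kM\}}\,f''(u)(f'(u)-a)g(v)$ is \emph{not} known a priori to be in $L^1(\Omega)$ — the $L^1$ bounds you get by testing the system against a fixed function control only $\int f'(u)g(v)$ and $\int f(u)g'(v)$, not the higher-order quantity $\int f''(u)(f'(u)-a)g(v)$.

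The paper's proof handles exactly this region by \emph{keeping} the term $\int f''(u)g(v)(f'(u)-a)^2$ on the left and showing (from~(\ref{deltaeps}), via an $h_1\le\delta\,f''(f'-a)$ estimate for $u\ge M$) that the $\{u\ge M\}$ part of the right-hand side is dominated by $\delta\int f''(u)g(v)(f'(u)-a)^2$ with $\delta<1$; only the $\{u<M\}$ part is then split in $v$ and absorbed into the cross term. Your approach could likely be repaired by cancelling only a $(1-\epsilon)$ fraction of $\int f''(u)g(v)(f'(u)-a)^2$, keeping the remaining $\epsilon$ fraction to absorb the large-$u$ part of $a\int f''(u)(f'(u)-a)g(v)$ (note $f''(u)(f'(u)-a)\le\frac{1}{f'(M)-a}f''(u)(f'(u)-a)^2$ for $u\ge M$); as written, however, the Young-type absorption claim is not justified.
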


\begin{proof} We obtain uniform estimates for any minimal solution $(u,v)$ of $(H)_{\lambda,\gamma}$ on the ray $ \Gamma_\sigma$ and then one sends $ \lambda \nearrow \lambda^*$ to obtain the same estimate for $ (u^*,v^*)$.   Let $(u,v)$ denote a smooth minimal solution of $(H)_{\lambda,\gamma}$ on the ray $\Gamma_\sigma$ and  put $ \zeta:= f'(u)-a$ and $ \eta:=g'(v)-b$ into (\ref{stabilityH}) to obtain
\begin{eqnarray*}\label{}
&&\int_{\Omega} \left[f''(u) g(v) (f'(u)-a)^2 +  f(u) g''(v) (g'(v)-b)^2 + 2  f'(u) g'(v) (f'(u)-a)(g'(v)-b)\right]dx
 \\ &\le&\nonumber
  \frac{1}{2} \int_{\RR^n} \int_{\RR^n}\left(  \frac{1}{\lambda} |f'(u(x))-  f'(u(z))|^2 + \frac{1}{\gamma}  |g'(v(x))- g'(v(z))|^2 \right)  J(x-z)  dz dx  . 
\end{eqnarray*}
Note that for all $\alpha,\beta\in\mathbb R$, one can see that
\begin{equation*}\label{}
|f'(\beta)-f'(\alpha)|^2=\left|\int_\alpha^\beta f''(s) ds\right|^2 \le \int_\alpha^\beta |f''(s)|^2 ds (\beta-\alpha)= (h_1(\beta)-h_1(\alpha))(\beta-\alpha), 
\end{equation*}
when $h_1(s):=\int_0^s  |f''(w)|^2 dw$. Similar inequality holds for the function $g$ 
that is 
\begin{equation*}\label{}
|g'(\beta)-g'(\alpha)|^2 \le  (h_2(\beta)-h_2(\alpha))(\beta-\alpha), 
\end{equation*}
when $h_2(s):=\int_0^s  |g''(w)|^2 dw$. Set $\beta=u(x)$ and $\alpha=u(z)$ and $\beta=v(x)$ and $\alpha=v(z)$ in the above inequalities to conclude 
\begin{eqnarray*}\label{}
&&\frac{1}{2} \int_{\RR^n} \int_{\RR^n}\left(  \frac{1}{\lambda} |f'(u(x))-  f'(u(z))|^2 + \frac{1}{\gamma}  |g'(v(x))- g'(v(z))|^2 \right)  J(x-z)  dz dx
\\&\le&
 \frac{1}{2} \int_{\RR^n} \int_{\RR^n}  \frac{1}{\lambda} [h_1(u(x))-  h_1(u(z))][u(x)-  u(z)]  J(x-z) dz dx
 \\&&+   
\frac{1}{2} \int_{\RR^n} \int_{\RR^n} \frac{1}{\gamma}  [h_2(v(x))-  h_2(v(z))][v(x)-  v(z)]   J(x-z)  dz dx. 
\end{eqnarray*}
From the equation of system and Lemma \ref{fgprop}, we get 
\begin{eqnarray*}\label{}
&&\frac{1}{2} \int_{\RR^n} \int_{\RR^n}\left(  \frac{1}{\lambda} |f'(u(x)) -  f'(u(z))|^2 + \frac{1}{\gamma}  |g'(v(x))- g'(v(z))|^2 \right)  J(x-z)  dz dx
\\&\le& 
\int_{\Omega} \left[ h_1(u) \frac{1}{\lambda} \mathcal L(u)  +  h_2(v) \frac{1}{\gamma} \mathcal L(v) \right]dx  
\\&=& 
\int_{\Omega} \left[ h_1(u)f'(u)g(v) dx + h_2(v)f(u)g'(v) \right]dx . 
\end{eqnarray*}
From this and we conclude that 
\begin{eqnarray}\label{stafg}
&&\int_{\Omega} \left[f''(u) g(v) (f'(u)-a)^2 +  f(u) g''(v) (g'(v)-b)^2 + 2  f'(u) g'(v) (f'(u)-a)(g'(v)-b)\right]dx
 \\ &\le&\nonumber
\int_{\Omega} \left[ h_1(u)f'(u)g(v) dx + h_2(v)f(u)g'(v) \right] dx . 
\end{eqnarray}
 Given the assumptions, there is some $ M>1$ large enough  and $0<\delta<1$ that for all $ u \ge M$ we have 
 $ h_1(u) \le \delta f''(u)(f'(u)-a)$ for all $ u \ge M$.  Then,  we have
  \begin{equation*} 
   \int_{\Omega} h_1(u) g(v) f'(u) = \int_{u \ge M} + \int_{u <M} \le  \delta \int f''(u) g(v) (f'(u)-a)^2  + \int_{u<M} \int  h_1(u) g(v) f'(u) .
  \end{equation*}
     We now estimate the last integral in the above.    Let $ k \ge 1$ denote a natural number. Then, 
   \begin{equation*} 
   \int_{u<M}  h_1(u) g(v) f'(u)  = \int_{u<M, v<kM} + \int_{u<M, v \ge kM} = C(k,M) + \int_{u<M, v \ge kM}h_1(u) g(v) f'(u) . 
    \end{equation*}
     Note that this integral is bounded above by
  \begin{equation*}
   \sup_{u<M} \frac{h_1(u)}{(f'(u)-a)} \sup_{v >kM} \frac{g(v)}{(g'(v)-b) g'(v)} \int (f'(u)-a) (g'(v)-b) f'(u) g'(v).
   \end{equation*}
 From the above estimates, we conclude that for sufficiently large $ M$ and for all $1 \le k$ there is some positive constant $ C(k,M)$ and $0<\delta <1$ that
\begin{eqnarray}\label{h1gf}
\ \ \ \ \  \int_{\Omega} h_1(u) g(v) f'(u)  &\le &  \delta \int f''(u) g(v) (f'(u)-a)^2  
 +  C(k,M)  
 \\&& \nonumber+  \sup_{u<M} \frac{h_1(u)}{(f'(u)-a)} \sup_{v >kM} \frac{g(v)}{(g'(v)-b) g'(v)} \int (f'(u)-a) (g'(v)-b) f'(u) g'(v).
 \end{eqnarray}
 Applying the same argument,  one can show that for sufficiently large $M$ and for all $ 1 \le k$ there is some positive constant $ D(k,M)$ and $0<\epsilon <1$ that
  \begin{eqnarray}\label{h2gf}
\ \ \ \ \   \int_{\Omega} h_2(v) g'(v) f(u)  &\le &  \epsilon \int f(u) g''(v) (g'(v)-b)^2  
 +  D(k,M)  
 \\&& \nonumber +  \sup_{v<M} \frac{h_2(v)}{(g'(v)-b)} \sup_{u >kM} \frac{f(u)}{(f'(u)-a) f'(u)} \int (f'(u)-a) (g'(v)-b) f'(u) g'(v).
 \end{eqnarray}
 Note that $ f''(u),g''(v) \rightarrow \infty$ when $u,v\to\infty$. This implies that 
   \begin{equation*}
    \lim_{k \rightarrow \infty}  \sup_{u >kM} \frac{f(u)}{(f'(u)-a) f'(u)} =0 \ \ \text{and} \ \   \lim_{k \rightarrow \infty}  \sup_{v >kM} \frac{g(v)}{(g'(v)-b) g'(v)} =0  . 
    \end{equation*}
 Now set $ k$ to be sufficiently large and substitute (\ref{h1gf}) and (\ref{h2gf}) in  (\ref{stafg}) to 
    complete the proof.  Note that  see that all the integrals in (\ref{stafg}) are bounded independent of $ \lambda$ and $\gamma$.

\end{proof}

\section{Regularity of the extremal solution; Proof of Theorem \ref{thmg}-\ref{nedev}}\label{secreg}

In this section, we apply the integral estimates established in the latter section to prove regularity results for extremal solutions of systems mentioned in the introduction earlier. 
\\
 \\
 \noindent {\it Proof of Theorem \ref{thmg}}. 
  We shall provide the proof for the case of $n>2s$, since otherwise is straightforward.  Let $(u,v)$ be the smooth minimal solution of  $ (G)_{\lambda,\gamma}$ for $\frac{\lambda^*}{2}<\lambda<\lambda^*$ and $\frac{\gamma^*}{2}<\gamma<\gamma^*$.   From Lemma \ref{lemuvg} we conclude that  
 \begin{equation}
 \int_{\Omega} e^{u+v} dx \le C_{\lambda,\gamma,|\Omega|} . 
  \end{equation}
 From this and Lemma \ref{lemuvgt},  we conclude that for $t>\frac{1}{2}$
  \begin{equation}
\lambda\gamma \left[1-\left(\frac{t }{4}+\epsilon\right)^2\right] XY \le  C_{\epsilon,\lambda,\gamma,|\Omega|} 
\left(1+X^{\frac{2t-1}{2t}} Y^{\frac{1}{2t}}  + Y^{\frac{2t-1}{2t}} X^{\frac{1}{2t}}\right) . 
  \end{equation}
Therefore, for every $t<4$  either $X$ or $Y$ must be
bounded where $X$ and $Y$ are given by 
 \begin{equation}
 X:=\int_{\Omega} e^{\frac{2t+1}{2}u} e^{\frac{v}{2}} dx \ \ \text{and} \ \ Y:=\int_{\Omega} e^{\frac{2t+1}{2}v} e^{\frac{u}{2}} dx. 
 \end{equation}
Without loss of generality, assume that $\lambda\le \gamma$ implies that $u\le v$ and therefore $e^u$ is bounded in $L^{q}(\Omega)$ for $q=t+1<5$. Therefore, in light of Proposition \ref{propregL} we have $u\in L^\infty(\Omega)$ for $\frac
{n}{2s}<5$ that is $n<10s$. 

Now, let $(u,v)$ be the smooth minimal solution of  $ (E)_{\lambda,\gamma}$ for $\frac{\lambda^*}{2}<\lambda<\lambda^*$ and $\frac{\gamma^*}{2}<\gamma<\gamma^*$.   From Lemma \ref{lemuve} we conclude that  
 \begin{equation}\label{}
 \int_{\Omega} (1+u)^p(1+v)^p dx \le C_{\lambda,\gamma,|\Omega|} . 
 \end{equation}
From this and Lemma \ref{lemuvet},  we conclude that 
  \begin{equation}
\lambda\gamma \left[p^2(1-\epsilon)^2 -\left( \frac{(t+1)^2 }{4t}  \right)^2\right] XY \le  C_{\epsilon,\lambda,\gamma,|\Omega|} 
\left(1+X^{\frac{2t-p+1}{2(t+1)}} Y^{\frac{p+1}{2(t+1)}}  + Y^{\frac{2t-p+1}{2(t+1)}} X^{\frac{p+1}{2(t+1)}}  \right) . 
  \end{equation}
Therefore, for every $1\le t< 2p+2\sqrt{p(p-1)}-1$  either $X$ or $Y$ must be
bounded where $X$ and $Y$ are given by 
 \begin{equation}
 X:=\int_{\Omega} (1+u)^{\frac{p-1}{2}+t+1} (1+v)^{\frac{p-1}{2}} dx \ \ \text{and} \ \ Y:=\int_{\Omega} (1+v)^{\frac{p-1}{2}+t+1} (1+u)^{\frac{p-1}{2}}  dx. 
 \end{equation}
 Without loss of generality, assume that $\lambda\le \gamma$ implies that $u\le v$ and therefore $(1+u)$ is bounded in $L^{q}(\Omega)$ for $q=p+t$. We now rewrite the system $ (E)_{\lambda,\gamma}$ for the extremal solution $(u,v)$ that is 
   \begin{eqnarray*}
  \left\{ \begin{array}{lcl}
\hfill  \mathcal L u    &=&\lambda^*   c_{v}(x) v + \lambda^* \qquad \text{in} \ \  \Omega,   \\
\hfill  \mathcal L v  &=& \gamma^* c_{u}(x) u +\gamma^*  \qquad \text{in} \ \  \Omega,  \\
\end{array}\right.
  \end{eqnarray*}  
  when   $0\le c_{v}(x)= \frac{(1+v)^{p}-1}{v} \le p v^{p-1}$ and $0\le c_{u}(x)= \frac{(1+u)^{p}-1}{u} \le p u^{p-1}$ where convexity argument is applied. From the regularity theory, Proposition \ref{propregL}, we conclude that $v\in L^\infty(\Omega)$ provided $ c_{u}(x)\in L^{r}(\Omega)$ when $r>\frac{n}{2s}$.   This implies that $\frac{n}{2s} <\frac{p+t}{p-1}$ for $1\le t<2p+2\sqrt{p(p-1)}-1$. This completes the proof for the case of Lane-Emden system $ (E)_{\lambda,\gamma}$. The proof for the case of $ (M)_{\lambda,\gamma}$ is very similar and replies on applying Lemma \ref{lemuvm} and Lemma \ref{lemuvmt}.


 \hfill $\Box$

\begin{remark}\label{rem1}
 Even though the above theorem is optimal as $s\to 1$, it is not optimal for smaller values of $0<s<1$.
 \end{remark}
  In this regard, consider the case of $\lambda=\gamma$ and the Gelfand system turns into ${(-\Delta)}^s u= \lambda e^u$ in the entire space $\mathbb R^n$. It is known that the explicit singular solution $u^*(x)=\log \frac{1}{|x|^{2s}}$ is stable solution of the scalar Gelfand equation   if and only if 
 \begin{equation*}
\frac{ \Gamma(\frac{n}{2}) \Gamma(1+s)}{\Gamma(\frac{n-2s}{2})} \le 
\frac{ \Gamma^2(\frac{n+2s}{4})}{\Gamma^2(\frac{n-2s}{4})} , 
\end{equation*}
for the constant 
$$\lambda=2^{2s}\frac{ \Gamma(\frac{n}{2}) \Gamma(1+s)}{\Gamma(\frac{n-2s}{2})}  . 
$$
This implies that the extremal solution of the fractional Gelfand equation should be regular for 
\begin{equation*}
\frac{ \Gamma(\frac{n}{2}) \Gamma(1+s)}{\Gamma(\frac{n-2s}{2})} >
\frac{ \Gamma^2(\frac{n+2s}{4})}{\Gamma^2(\frac{n-2s}{4})} . 
\end{equation*}
In particular, the extremal solution should be bounded when $n\le 7$ for $0<s<1$.  We refer interested to \cite{rs,r1} for more details. Now, consider the case of Lane-Emden equation $(-\Delta)^s u= \lambda u^p$ in the entire space $\mathbb R^n$. It is also known that the explicit singular solution $u_s(x) = A |x|^{-\frac{2s}{p-1}}$ where the constant $A$ is given by 
\begin{equation*}
A^{p-1} =\frac{\Gamma(\frac{n}{2}-\frac{s}{p-1}) \Gamma(s+\frac{s}{p-1})}{\Gamma(\frac{s}{p-1}) \Gamma(\frac{n-2s}{2}-\frac{s}{p-1})}  ,
\end{equation*}
 is a stable solution of the scalar Lane-Emden equation if and only if 
\begin{equation*}\label{} 
p \frac{\Gamma(\frac{n}{2}-\frac{s}{p-1}) \Gamma(s+\frac{s}{p-1})}{\Gamma(\frac{s}{p-1}) \Gamma(\frac{n-2s}{2}-\frac{s}{p-1})}
\le 
\frac{ \Gamma^2(\frac{n+2s}{4}) }{\Gamma^2(\frac{n-2s}{4})}. 
\end{equation*}
This yields that the extremal solution of the above equation should be regular for 
\begin{equation*}\label{} 
p \frac{\Gamma(\frac{n}{2}-\frac{s}{p-1}) \Gamma(s+\frac{s}{p-1})}{\Gamma(\frac{s}{p-1}) \Gamma(\frac{n-2s}{2}-\frac{s}{p-1})}
> 
\frac{ \Gamma^2(\frac{n+2s}{4}) }{\Gamma^2(\frac{n-2s}{4})}. 
\end{equation*}
As $s\to 1$, the above inequality is consistent with the dimensions given in (\ref{dime}). For more information, we refer interested readers to Wei and the author in \cite{fw} and Davila et al. in \cite{ddw}. Given above, proof of the optimal dimension for regularity of extremal solutions remains an open problem. 

We now provide a proof for Theorem \ref{nedev} that deals with a regularity result for the gradient system  $ (H)_{\lambda,\gamma}$ with  general nonlinearities $f$ and $g$.  Note that for the case of local scalar equations such results are provided by Nedev in \cite{Nedev} for $n=3$ and Cabr\'{e} in \cite{Cabre} for $n=4$.  For the case of scalar equation with the fractional Laplacian operator, Ros-Oton and  Serra in \cite{rs} established regularity results for dimensions $n<4s$ when $0<s<1$.  For the case of local gradient systems, that is when $s=1$, such a regularity  result is established by the author and Cowan in \cite{cf} in dimensions $n\le 3$. 
\\
\\
\noindent {\it Proof of Theorem \ref{nedev}}. 
We suppose that $ (\lambda^*,\gamma^*) \in \Upsilon$ and $(u,v)$ is the associated extremal solution of $(G)_{\lambda^*,\gamma^*}$.  Set $ \sigma=\frac{\gamma^*}{\lambda^*}$.
From Lemma \ref{lemab}, we conclude that $ f'(u) g'(v) \in L^2(\Omega)$.   Note that this and the convexity of $ g$ show that
\begin{equation}
 \int_\Omega \frac{f'(u)^2 g(v)^2}{(v+1)^2} \le C.
 \end{equation}
Note that    $ (-\Delta)^s u\in L^1$  and $ (-\Delta)^s v\in L^1$ and hence  we have $ u,v \in L^{p}$ for any $ p <\frac{n}{n-2s}$.   We now use the domain decomposition method as in \cite{Nedev,cf}.  Set
\begin{eqnarray*}
 \Omega_1 &:=& \left\{ x: \frac{f'(u)^2 g(v)^2}{(v+1)^2} \ge f'(u)^{2-\alpha} g(v)^{2-\alpha} \right\},\\
\Omega_2 &:=& \Omega \backslash \Omega_1 = \left\{ x : f'(u) g(v) \le (v+1)^\frac{2}{\alpha} \right\},
 \end{eqnarray*}
where $ \alpha  $ is a positive constant and will be fixed later.  First note that
\begin{equation}
 \int_{\Omega_1} (f'(u) g(v))^{2-\alpha} \le \int_\Omega \frac{f'(u)^2 g(v)^2}{(v+1)^2} \le C.
 \end{equation}
  Similarly we have
\begin{equation}
 \int_{\Omega_2} (f'(u) g(v))^p \le \int_\Omega (v+1)^\frac{2p}{\alpha}.
  \end{equation}
We shall consider the case of $n>2s$, and $n\le 2s$ is straightforward as discussed in Section \ref{secpre}.  Taking $ \alpha= \frac{4(n-2s)}{3n-4s}$ and using the    $ L^{p}$-bound  on $ v$  for $p<\frac{n}{n-2s}$ shows that $ f'(u) g(v) \in  L^{p}(\Omega)$ for $p<\frac{2n}{3n-4s}$.   By a symmetry argument we also have $ f(u) g'(v) \in   L^{p}(\Omega)$ for $p<\frac{2n}{3n-4s}$. Therefore, $(-\Delta )^s u,(-\Delta )^s v\in L^p(\Omega)$ when $p<\frac{2n}{3n-4s}$. From elliptic estimates we conclude that $u,v\in L^p(\Omega)$ when  $p<\frac{2n}{3n-8s}$ for $n>\frac{8}{3}s$ and when $p<\infty$ for $n=\frac{8}{3}s$ and when  $p\le \infty$ for $n<\frac{8}{3}s$.  This completes the proof when $2s\le n<\frac{8}{3}s$. 
\\
Now, set  $ \alpha= \frac{3n-8s}{2(n-2s)}$. From the above estimate $u,v\in L^p(\Omega)$ when  $p<\frac{2n}{3n-8s}$ for $n>\frac{8}{3}s$ on $v$ and domain decomposition arguments, we get $ f(u) g'(v),  f'(u) g(v)\in   L^{p}(\Omega)$ for $p<\frac{n}{2(n-2s)}$. Therefore, $(-\Delta )^s u,(-\Delta )^s v\in L^p(\Omega)$ with the latter bounds for $p$. From elliptic estimates we get 
$u,v\in L^p(\Omega)$ when  $p<\frac{n}{2(n-3s)}$ for $n>3s$ and when $p<\infty$ for $n=3s$ and when  $p\le \infty$ for $n<3s$. We perform the above arguments once more to arrive at 
$u,v\in L^p(\Omega)$ when  $p<\frac{2n}{5n-16s}$ for $n>\frac{16}{5}s$ and when $p<\infty$ for $n=\frac{16}{5}s$ and when  $p\le \infty$ for $n<\frac{16}{5}s$. This completes the proof for 
$\frac{8}{3}s\le n < \frac{16}{5}s$ containing $n=3s$.
\\
 Now, suppose that   $u,v\in L^p(\Omega)$ for $p<p_*$. Then, notice that 
\begin{equation*}
 \int_{\Omega_2} (f'(u) g(v))^p \le \int_\Omega (v+1)^\frac{2p}{\alpha} \le C \ \ \text{when} \ \ p<\frac{\alpha p_*}{2}. 
  \end{equation*}
 Set  $ \alpha= \frac{4}{p_*+2}$. Then, from the above we conclude that $ f'(u) g(v) \in  L^{p}(\Omega)$ for $p<\frac{2p_*}{p_*+2}$ and similarly $ f(u) g'(v) \in  L^{p}(\Omega)$ for $p<\frac{2p_*}{p_*+2}$. Applying the fact that $(-\Delta)^s u,(-\Delta)^s v\in L^p(\Omega)$ for the same range of $p$. From elliptic estimates we conclude that 
 \begin{equation*}
 u,v\in L^{p}(\Omega) \ \ \text{when} \ \ p<\frac{2p_*n}{p_*( n-4s) +2n} . 
 \end{equation*}
Applying the above elliptic estimates arguments, we conclude the boundedness of solutions when $n<4s$.

\hfill $\Box$ 


We end this section with power polynomial nonlinearities for the gradient system $ (H)_{\lambda^*,\gamma^*}$ and we provide regularity of the extremal solution. For the case of local systems, that is when $s=1$,  a similar result is given in \cite{cf}. Due to the technicality of the proof we omit it here.    
 
\begin{thm} \label{grade} Let $ f(u)=(1+u)^p $ and $  g(v)=(1+v)^q$ when $ p,q>2$.  Assume that $(\lambda^*,\gamma^*) \in \Upsilon$.  Then,  the associated extremal solution of  $ (H)_{\lambda^*,\gamma^*}$ is bounded provided
\begin{equation} \label{gradp} 
n < 2s + \frac{4s}{p+q-2} \max\{ T(p-1), T(q-1)\} , 
\end{equation}
when $ T(t):=  t+ \sqrt{t(t-1)}$.  
\end{thm}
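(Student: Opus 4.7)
The plan is to adapt the scheme of Theorem~\ref{thmg} to the coupled gradient structure of $(H)_{\lambda,\gamma}$. Let $(u,v)$ denote a smooth minimal solution on the ray $\Gamma_\sigma$ with $\sigma=\gamma^*/\lambda^*$; every bound I derive will be uniform as $\lambda\nearrow\lambda^*$, so it transfers to the extremal pair by monotone convergence.

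First, I would test the stability inequality (\ref{stabilityH}) with $\zeta=(1+u)^{\alpha}-1$ and $\eta=(1+v)^{\beta}-1$, where $\alpha,\beta>1/2$ are parameters to be optimized. For $f(u)=(1+u)^p$ and $g(v)=(1+v)^q$, the left-hand side of (\ref{stabilityH}), after expanding and absorbing lower-order terms via Young's inequality, becomes
\begin{equation*}
p(p-1)\,A+q(q-1)\,B+2pq\,C+\mathrm{LOT},
\end{equation*}
where $A=\int_\Omega(1+u)^{p-2+2\alpha}(1+v)^q$, $B=\int_\Omega(1+u)^p(1+v)^{q-2+2\beta}$, and $C=\int_\Omega(1+u)^{p-1+\alpha}(1+v)^{q-1+\beta}$. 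For the right-hand side, applying Lemma~\ref{fgprop} together with the elementary pointwise inequality
\begin{equation*}
|a^{\alpha}-b^{\alpha}|^2\le\frac{\alpha^2}{2\alpha-1}(a-b)(a^{2\alpha-1}-b^{2\alpha-1}),\qquad a,b>0,\ \alpha>\tfrac12,
\end{equation*}
and substituting $\mathcal L u=\lambda p(1+u)^{p-1}(1+v)^q$ and $\mathcal L v=\gamma q(1+u)^p(1+v)^{q-1}$, yields the upper bound $\tfrac{p\alpha^2}{2\alpha-1}A+\tfrac{q\beta^2}{2\beta-1}B$.

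Combining the two sides produces the key inequality
\begin{equation*}
\Bigl[\tfrac{p\alpha^2}{2\alpha-1}-p(p-1)\Bigr]A+\Bigl[\tfrac{q\beta^2}{2\beta-1}-q(q-1)\Bigr]B\ge 2pq\,C-\mathrm{LOT},
\end{equation*}
and Cauchy--Schwarz gives $C\le\sqrt{AB}$. The coefficient of $A$ vanishes precisely at $\alpha=T(p-1)$, where $(p-1)(2\alpha-1)-\alpha^2=0$, and similarly for $B$ at $\beta=T(q-1)$. Taking $\alpha$ just below $T(p-1)$ and $\beta$ just below $T(q-1)$, then combining this estimate with the two scalar-type ones obtained by setting in turn $\eta=0$ (giving $A$-control alone) and $\zeta=0$ (giving $B$-control alone), and multiplying the resulting inequalities in the spirit of Lemmas \ref{lemuve}--\ref{lemuvet} while bootstrapping to absorb $\mathrm{LOT}$, I expect to obtain a family of joint integrability bounds $\int_\Omega (1+u)^{m_u}(1+v)^{m_v}\le C$ whose endpoints allow $m_u$ and $m_v$ to approach $p-2+2T(p-1)$ and $q-2+2T(q-1)$ respectively.

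Finally, Hölder interpolation among these joint estimates places $\mathcal L u=\lambda p(1+u)^{p-1}(1+v)^q$ and $\mathcal L v=\gamma q(1+u)^p(1+v)^{q-1}$ in $L^r(\Omega)$ for some $r>n/(2s)$ whenever the dimension obeys (\ref{gradp}), and Proposition~\ref{propregL} then gives $u,v\in L^\infty(\Omega)$. The denominator $p+q-2$ in (\ref{gradp}) mirrors the homogeneity exponent $\mu=2s/(p+q-2)$ of the radial singular pair $(A|x|^{-\mu},B|x|^{-\mu})$ solving the corresponding homogeneous system on $\mathbb R^n$, which is the geometric obstruction. The main obstacle I foresee lies in the third paragraph: juggling multiple test-function choices and their coupled integral inequalities, and performing the multi-parameter Hölder interpolation so that the stronger factor $\max\{T(p-1),T(q-1)\}$, rather than a symmetric combination, emerges. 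The gradient structure forces the cross term $2pq\,C$ to carry much of the sharp information, which is both the source of the improved exponent relative to Theorem~\ref{nedev} and the essential technical difficulty.
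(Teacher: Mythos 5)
The paper does not supply a proof of Theorem \ref{grade}; the text immediately before the statement says the proof is omitted ``due to the technicality'' and points to \cite{cf} for the local case $s=1$. Measured against the paper's template for the other systems, your first stage is the right move: testing (\ref{stabilityH}) with $\zeta=(1+u)^\alpha-1$, $\eta=(1+v)^\beta-1$, applying Lemma \ref{fgprop} together with the pointwise inequality $|a^\alpha-b^\alpha|^2\le\frac{\alpha^2}{2\alpha-1}(a-b)(a^{2\alpha-1}-b^{2\alpha-1})$, and substituting the equations for $\mathcal Lu$, $\mathcal Lv$. The coefficient of $A$ does vanish at $\alpha=T(p-1)$ and that of $B$ at $\beta=T(q-1)$, and (after first establishing a base estimate \`a la Lemma \ref{lemuve} so the lower-order terms can actually be absorbed) you obtain $A=\int_\Omega(1+u)^{p-2+2\alpha}(1+v)^q\le C$ for $\alpha<T(p-1)$ and $B=\int_\Omega(1+u)^p(1+v)^{q-2+2\beta}\le C$ for $\beta<T(q-1)$, analogously to (\ref{lgXet}).

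The genuine gap is in passing from these integral bounds to $L^\infty$. Your claim that the joint-integrability endpoints ``allow $m_u$ and $m_v$ to approach $p-2+2T(p-1)$ and $q-2+2T(q-1)$ respectively'' is not what $A$ and $B$ deliver: in $A$ the $v$-exponent is frozen at $q$, in $B$ the $u$-exponent at $p$, and the convex hull of $(0,0)$, $(p-2+2\alpha,q)$, $(p,q-2+2\beta)$ does not reach $\bigl(p-2+2T(p-1),\,q-2+2T(q-1)\bigr)$. Moreover, the H\"older interpolation you invoke provably falls short of (\ref{gradp}). Take $p=q$, so $T(p-1)=T(q-1)$: interpolating $A$ and $B$ places $(1+u)^{p-1}(1+v)^q$ in $L^r$ only up to $r=\frac{2(p-1+T(p-1))}{2p-1}$, so Proposition \ref{propregL} gives boundedness only for $n<2s\cdot\frac{2(p-1+T(p-1))}{2p-1}$, whereas the statement allows $n<2s\cdot\frac{p-1+T(p-1)}{p-1}$; the ratio of the first threshold to the second is $\frac{2(p-1)}{2p-1}<1$, so plain interpolation loses a fixed factor. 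The corresponding step in the proof of Theorem \ref{thmg} for the Lane--Emden system is bridged by the WLOG ordering $u\le v$, the convexity rewriting $\mathcal Lv=\gamma^*c_u(x)u+\gamma^*$ with $c_u\le p(1+u)^{p-1}$, and a bootstrap with Proposition \ref{propregL}. An adaptation of that machinery to the product nonlinearities $f'(u)g(v)$, $f(u)g'(v)$ is exactly what is missing from your sketch, and it is presumably the source of the ``technicality'' that led the authors to defer the proof to \cite{cf}.
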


\vspace*{.4 cm }

\noindent {\it Acknowledgment}.   The  author would like to thank Professor Xavier Ros-Oton for online communications and comments in regards to Section \ref{secpre}. The author is grateful to Professor Xavier Cabr\'{e} for bringing reference \cite{sp} to his attention and for the comments in regards to Section \ref{secin}.

\end{document}